\documentclass[a4paper, article, oneside, USenglish, final]{memoir}

\usepackage[utf8]{inputenx} 
\usepackage[T1]{fontenc}    

\usepackage{lmodern}           
\usepackage[scaled]{beramono}  
\usepackage[final]{microtype}  
\pretolerance = 2000
\tolerance    = 6000
\hbadness     = 6000
\pretitle{\begin{center}\LARGE\sffamily\bfseries}                       
\setsecheadstyle{\large\bfseries\boldmath\sffamily\raggedright}         
\setsubsecheadstyle{\normalsize\bfseries\boldmath\sffamily\raggedright} 
\setsubsubsecheadstyle{\normalsize\bfseries\sffamily\raggedright}
\setparaheadstyle{\normalsize\bfseries\sffamily\raggedright}            
\setsecnumdepth{subsection}

\usepackage{amssymb}   
\usepackage{amsthm}    
\usepackage{thmtools}  
\usepackage{mathtools} 
\usepackage{mathrsfs}  
\numberwithin{equation}{chapter}

\RequirePackage{enumitem}
\setlist[itemize]{font = \upshape, before = \leavevmode}
\setlist[enumerate]{font = \upshape, before = \leavevmode}
\setlist[description]{font = \bfseries\sffamily, before = \leavevmode}

\usepackage{array}
\usepackage{longtable}

\usepackage{xspace}    
\usepackage{graphicx}  
\graphicspath{{figures/}}
\usepackage{babel}     
\usepackage{csquotes}  
\usepackage{textcomp}  
\usepackage{listings}  
\lstset{basicstyle = \ttfamily, frame = tb}
\usepackage[
            textwidth = 40mm,
            textsize = footnotesize]{todonotes}
\usepackage[notref, notcite]{showkeys}

\RequirePackage[backend     = biber,
                sortcites   = true,
                giveninits  = true,
                maxbibnames = 5,
                doi         = false,
                isbn        = false,
                url         = false,
                style       = alphabetic]{biblatex}
\DeclareNameAlias{sortname}{family-given}
\DeclareNameAlias{default}{family-given}
\DeclareFieldFormat[article]{volume}{\bibstring{jourvol}\addnbspace#1}
\DeclareFieldFormat[article]{number}{\bibstring{number}\addnbspace#1}
\renewbibmacro*{volume+number+eid}
{
    \printfield{volume}
    \setunit{\addcomma\space}
    \printfield{number}
    \setunit{\addcomma\space}
    \printfield{eid}
}

\addbibresource{bibliography.bib}

\usepackage{varioref}
\usepackage[colorlinks, allcolors = uiolink, linktoc = all, pdfusetitle]{hyperref}
\urlstyle{same}
\usepackage[nameinlink, capitalize, noabbrev]{cleveref}

\DeclareSymbolFont{extraup}{U}{zavm}{m}{n}
\DeclareMathSymbol{\vardiamond}{\mathalpha}{extraup}{87}

\declaretheoremstyle[headfont   = \bfseries\sffamily,
                     notefont   = \normalfont,
                     bodyfont   = \itshape,
                     spaceabove = 6pt,
                     spacebelow = 6pt]{plain}
\declaretheoremstyle[headfont   = \bfseries\sffamily,
                     notefont   = \normalfont,
                     spaceabove = 6pt,
                     spacebelow = 6pt]{definition}
\declaretheorem[style = plain, numberwithin = chapter]{theorem}
\declaretheorem[style = plain,      sibling = theorem]{corollary}
\declaretheorem[style = plain,      sibling = theorem]{lemma}
\declaretheorem[style = plain,      sibling = theorem]{proposition}
\declaretheorem[style = plain,      sibling = theorem]{conjecture}
\declaretheorem[style = definition, sibling = theorem]{definition}

\declaretheorem[style = definition, sibling = theorem, qed = \(\spadesuit\)]{remark}



\DeclareMathOperator{\Sing}{Sing}

\DeclarePairedDelimiter{\linspan}{\langle}{\rangle}

\newcommand{\R}{\mathbb{R}}   
\newcommand{\C}{\mathbb{C}}   
\renewcommand{\P}{\mathbb{P}} 
\newcommand{\PP}{\mathbb{P}}  



\definecolor{uiolink}{HTML}{0B5A9D}

\newcommand{\locus}[1]{rank-\(#1\)-locus}
\newcommand{\point}[1]{rank-\(#1\)-point}
\newcommand{\quadric}[1]{rank-\(#1\)-quadric}
\newcommand{\rankmatrix}[1]{rank-\(#1\)-matrix}

\let\oldand\and
\renewcommand\and{\texorpdfstring{\oldand}{,}}

\title{Rational quartic spectrahedra}
\author{Martin Hels{\o} \and Kristian Ranestad}
\date{\vskip-3em}

\begin{document}

\maketitle

\begin{abstract}
    \noindent
    Rational quartic spectrahedra in $3$-space are semialgebraic convex subsets in $\R^3$ of semidefinite, real symmetric $(4 \times 4)$-matrices,
    whose boundary admits a rational parameterization.
    The Zariski closure in $\C\P^3$ of the boundary of a rational spectrahedron
    is a rational complex symmetroid.
    We give necessary conditions on the configurations of singularities
    of the corresponding real symmetroids in $\R\P^3$
    of rational quartic spectrahedra.
    We provide an almost exhaustive list of examples realizing the configurations,
    and conjecture that the missing example does not occur.
\end{abstract}

\chapter{Introduction}

Spectrahedra are important basic objects in polynomial optimization and in convex algebraic geometry \cite{BPT}.
They are intersections of the cone of positive-semidefinite matrices in the space of real symmetric $(n \times n)$-matrices by an affine subspace.
Quartic spectrahedra are the case of $(4 \times 4)$-matrices intersected with a $3$-dimensional affine space that contains a positive definite matrix.
We identify the affine space with $\R^3$.
The boundary of a quartic spectrahedron has a Zariski closure $V(f_A) \subset \R\PP^3$ defined by the determinant $f_A(x) \coloneqq f_A(x_0, x_1, x_2, x_3)$ of a symmetric matrix $A(x)$,
where explicitly
\begin{equation}
    \label{eq:AxAx}
    A(x) \coloneqq A_0 x_0 + A_1 x_1 + A_2 x_2 + A_3 x_3, 
\end{equation}
and each $A_i$ is a real symmetric $(4 \times 4)$-matrix.
Since the matrix $A(x)$ is symmetric,
the surface $V(f_A)$ is called a \emph{(real) symmetroid}.
Similarly,
the complex algebraic boundary $V_\C(f_A) \subset \C\PP^3$
defined by $f_A$ is called a \emph{complex symmetroid}
to distinguish it from its real points $V(f_A)$.
We say that a real symmetroid $V(f_A)$ is \emph{spectrahedral}
if it is the Zariski closure of the boundary of a nonempty spectrahedron,
i.e.,
if $A(x_p)$ is definite for some $x_p \in \R^3$.
If the complex algebraic boundary of a quartic spectrahedron is a rational complex symmetroid,
we say that the spectrahedron is \emph{rational}.
The general quartic symmetroid is not rational.

For a general matrix $A(x)$,
the singular points of $V(f_A)$ is a finite set of double points,
quadratic singularities called \emph{nodes}.
The possible arrangements of the nodes of general quartic spectrahedral symmetroids
were identified by Degtyarev and Itenberg \cite{DI}
and further investigated by Ottem et al.\ in \cite{ORSV}.
In \cite{ORSV},
the authors describe their paper as a
\enquote{first step towards the classification
of all spectrahedra of a given degree and dimension}.
A quartic surface with a finite set of nodes,
or more generally rational double points,
is irrational,
in fact birational to a K3-surface.
This paper sets out to fill in part of the classification
of quartic spectrahedra in $3$-space by studying the rational members.

The significant feature of a rational spectrahedron
is that its boundary allows a parameterization of rational functions from a subset $X \subset \C\P^2$.
This is immediate from the fact that
the Zariski closure in $\C\P^3$ of the boundary
allows a parameterization of rational functions from $\C\P^2$.
We do not provide any explicit boundary parameterizations.
\emph{A priori},
$X$ does not have to be a subset of $\R\P^2$
even though the boundary of the spectrahedron is real.

A quartic surface is rational
only if it has a triple point, an elliptic double point or is singular along a curve cf.\ \cite{Jes,Noe}.
The first author identified families of rational quartic symmetroids in \cite{He}.
We state the results of that paper,
after making a note about ranks and quadrics.

At every point $x_p \in V(f_A) \subset \C\PP^3$,
the matrix $A(x_p)$ has rank at most $3$.
We say that $x_p \in V(f_A)$ is a \emph{\point{$k$}},
if $A(x_p)$ has rank $k$.
The symmetroid $V(f_A)$ has a double point at each \point{2},
and a triple point at each \point{1}.
It may, however, also be singular at \point{3}s.
This phenomenon is characterized in \cref{lem:accidental-node}
by properties of the web of quadrics associated to the symmetroid:

If $y \coloneqq (y_0, y_1, y_2, y_3)$,
then $q_A(x_p) \coloneqq y\cdot A(x_p) \cdot y^T$ is a quadratic form,
and its vanishing $Q_A(x_p) \coloneqq V(q_A(x_p)) \subset \C\PP^3_y$ is a quadric surface.
The set $Q_A(x) \coloneqq \big\{Q_A(x_p) \mid x_p \in \C\PP^3 \big\}$ is called a \emph{web of quadrics}.

\begin{lemma}[{\cites[Lemma~2.13]{Ili+17}[Lemma~1.1]{Wal81}}]
    \label{lem:accidental-node}
    The symmetroid $V_\C(f_A)$ has a singularity at a \point{3}~$x_p$
    if and only if the web of quadrics $Q_A(x)$ has a basepoint
    at the singular point of $Q_A(x_p)$.
\end{lemma}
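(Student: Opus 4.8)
The plan is to compute the gradient of $f_A$ at $x_p$ explicitly; both implications then fall out of the resulting formula. Since $x_p$ is a \point{3}, the kernel of the symmetric matrix $A(x_p)$ is one-dimensional; write $v^T$ for a spanning vector, so $A(x_p)\,v^T = 0$ with $v = (v_0, v_1, v_2, v_3)$. The point $[v] \in \C\P^3_y$ is then the vertex of the rank-$3$ quadric cone $Q_A(x_p) = V(y\cdot A(x_p)\cdot y^T)$ --- indeed, the partial derivatives of $y\cdot A(x_p)\cdot y^T$ with respect to $y$ are, up to the factor $2$, the entries of $A(x_p)\,y^T$, which vanish exactly on $\ker A(x_p)$ --- and hence $[v]$ is the unique singular point of $Q_A(x_p)$. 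By definition the web $Q_A(x)$ has a basepoint at $[v]$ precisely when $v\cdot A(x_p')\cdot v^T = 0$ for every $x_p' \in \C\P^3$; since $A$ depends linearly on $x$, this is equivalent to the four scalar conditions $v\cdot A_i\cdot v^T = 0$ for $i = 0, 1, 2, 3$.

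Next I would apply Jacobi's formula $\partial_{x_i} f_A(x) = \operatorname{tr}\!\big(\operatorname{adj}(A(x))\cdot A_i\big)$. The essential linear-algebra input is that $\operatorname{adj}(A(x_p))$ is a nonzero symmetric matrix of rank $1$: from $A(x_p)\operatorname{adj}(A(x_p)) = \det(A(x_p))\,I = 0$, each column of $\operatorname{adj}(A(x_p))$ lies in $\ker A(x_p) = \langle v^T\rangle$; symmetry of the adjugate then forces $\operatorname{adj}(A(x_p)) = \lambda\, v^T v$ for a scalar $\lambda$, and $\lambda \neq 0$ because $A(x_p)$ has rank exactly $3 = 4 - 1$. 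Substituting gives $\partial_{x_i} f_A(x_p) = \lambda\,\operatorname{tr}(v^T v A_i) = \lambda\, v\cdot A_i\cdot v^T$, so the gradient of $f_A$ at $x_p$ equals $\lambda\cdot\big(v A_0 v^T,\, v A_1 v^T,\, v A_2 v^T,\, v A_3 v^T\big)$.

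It remains to conclude. The point $x_p$ lies on $V_\C(f_A)$ since $\rank A(x_p) = 3$, and, by the Euler relation since $f_A$ has positive degree, it is a singular point of $V_\C(f_A)$ if and only if this gradient vanishes. Because $\lambda \neq 0$, that happens exactly when $v\cdot A_i\cdot v^T = 0$ for all $i$, which by the first paragraph is exactly the assertion that $Q_A(x)$ has a basepoint at the singular point $[v]$ of $Q_A(x_p)$. The one point demanding care is the nonvanishing $\lambda \neq 0$ --- equivalently, that the adjugate of a rank-$(n-1)$ matrix is itself nonzero --- since this is what makes the "only if" direction genuine rather than vacuous; the rest is formal manipulation, together with the observation that being a \point{3} is precisely what guarantees $Q_A(x_p)$ is a cone with a well-defined isolated vertex.
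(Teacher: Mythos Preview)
Your argument is correct. The paper, however, does not supply its own proof of this lemma: it is stated with citations to \cite[Lemma~2.13]{Ili+17} and \cite[Lemma~1.1]{Wal81} and then used as a black box. So there is no in-paper proof to compare against; your Jacobi-formula computation together with the rank-$1$ description of the adjugate at a corank-$1$ point is the classical route and is essentially what one finds in the cited sources. One cosmetic remark: your invocation of the Euler relation is not really needed, since you already know $x_p \in V_\C(f_A)$ from $\rank A(x_p) = 3$; the singularity criterion is then simply the vanishing of all partials.
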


\noindent
The main results of \cite{He} can be summarized as follows:

\begin{theorem}[{\cite{He}}]
    \label{thm:symmsing}
    The rational complex quartic symmetroids in $\C\P^3$ form irreducible families
    whose general members are of the following types:
    \begin{enumerate}[label = \Alph{*}.,
                        ref = \textup{\Alph{*}}]
        \item
        \label{item:triple}
        $V_\C(f_A)$ has a triple point and six additional nodes;

        \item
        \label{item:tacnode}
        $V_\C(f_A)$ has a tacnode and six additional nodes,
        and the web $Q_A(x)$ has two basepoints;

        \item
        \label{item:conic}
        $V_\C(f_A)$ is singular along a conic and has four additional nodes,
        and the web $Q_A(x)$ has four linearly independent basepoints;

        \item
        \label{item:line}
        $V_\C(f_A)$ has rank $2$ along a line and has six additional nodes,
        and the web $Q_A(x)$ has four coplanar basepoints;

        \item
        \label{item:rank3-line}
        $V_\C(f_A)$ is singular of rank $3$ along a line and has four additional nodes,
        and the web $Q_A(x)$ has one basepoint.
    \end{enumerate}
\end{theorem}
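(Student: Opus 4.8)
The strategy is to combine the classical rationality criterion for quartic surfaces with a case analysis governed by the web of quadrics $Q_A(x)$. By the criterion of Noether and Jessop \cite{Jes,Noe}, an irreducible rational quartic surface in $\C\P^3$ either has a point of multiplicity at least $3$, or has an elliptic double point (one that is not a rational double point), or is singular along a curve, so it is enough to run through these three possibilities for a symmetroid. The bridge to the matrices is that every singular point of $V_\C(f_A)$ is either a \point{2}---automatically a node, and for general $A_i$ the only kind of singularity the rank stratification forces---or a \point{3} at which the web $Q_A(x)$ has a basepoint, by \cref{lem:accidental-node}. Since the variety of symmetric $(4 \times 4)$-matrices of rank at most $2$ is irreducible of codimension $3$ and degree $10$ in $\P^9$, a general symmetroid has exactly ten nodes and a base-point-free web, and the rational families are what one obtains by forcing this stratification to degenerate in one of the three permitted ways.

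\textbf{Triple point (type A).} A triple point of $V_\C(f_A)$ sits at a \point{1}: the multiplicity of $\det A$ at a point equals the corank of $A(x_p)$ for general $A_i$ (provided $V_\C(f_A)$ is not a cone), so only corank $3$ gives multiplicity $3$. The corresponding family is the image of an irreducible incidence variety---parameterising a point $x_p$, a rank-$1$ symmetric matrix to serve as $A(x_p)$, and the matrices completing the pencil, and fibred in affine spaces over the product of $\C\P^3$ with the locus of rank-$1$ symmetric matrices in $\P^9$---hence it is irreducible. For the residual singularities I would use that the variety of symmetric $(4 \times 4)$-matrices of rank at most $2$ has multiplicity $4$ at a rank-$1$ matrix (the degree of the Veronese surface in $\P^5$); a general $\P^3$ through a rank-$1$ point therefore meets this degree-$10$ variety in that point counted four times and in six further simple points, which are the six residual nodes, while the induced web stays base-point-free.

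\textbf{Singular curves (types C, D, E).} If $V_\C(f_A)$ is singular along a curve $C$, then either $C$ consists of \point{2}s, or $C$ consists of \point{3}s at which the web has a basepoint. A rank count on the $A_i$, combined with the bound on the degree of a double curve of an irreducible quartic surface, leaves only $C$ a conic or a line: a conic of \point{2}s makes the web acquire four linearly independent basepoints (type C); a line of \point{2}s makes it acquire four coplanar basepoints (type D); and a line of \point{3}s carrying a single web basepoint produces the rank-$3$ behaviour of type E. In each case the family is the image of an irreducible parameter variety---assembled from the choice of $C$, the basepoint data, and the matrices $A_i$ compatible with them, and fibred in affine spaces over the Grassmannian of lines, respectively over the variety of conics in $\C\P^3$---hence irreducible; the stated numbers of residual nodes ($4$, $6$, $4$) then follow from a standard Euler-characteristic computation on the minimal resolution, a smooth rational surface.

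\textbf{Elliptic double points (type B), and the main obstacle.} Among elliptic double points of quartics one must show that only the tacnode occurs on a symmetroid and that its presence forces the web to have exactly two basepoints: writing the local equation of the tacnode through $A(x)$, the tangency of the two sheets translates into two common zeros of the quadratic forms $y \cdot A_i \cdot y^T$, and the six residual nodes---four honest \point{2}s together with the two accidental \point{3}s at those basepoints---follow as before. The step I expect to be genuinely delicate is \emph{completeness together with realisability}: proving that these five families exhaust the rational symmetroids---ruling out every higher elliptic double point, every reducible or higher-degree double curve, and every mixed configuration---while simultaneously producing, in each of the five cases, an explicit $A_0, \dots, A_3$ whose symmetroid is general in its family and carries no singularities beyond those listed. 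It is this last verification that makes the families nonempty and the classification sharp.
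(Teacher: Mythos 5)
The paper never proves this theorem---it is imported verbatim from \cite{He}---but the remark that immediately follows it already contradicts your account of type~\ref{item:tacnode}: for a general member of each family \emph{all} the additional nodes are \point{2}s. You claim the six additional nodes in the tacnodal case split as ``four honest \point{2}s together with the two accidental \point{3}s at those basepoints.'' This is not the geometry. If $p$, $\overline{p}$ are the two basepoints, the locus of $x$ with $A(x)\cdot p^{T}=0$ is cut out by three independent linear conditions on $\P^3_x$ (the fourth is redundant by the Euler relation once $p$ is a basepoint), hence generically a single point; and the tacnodal quadric is a pair of planes both containing the line $\linspan{p,\overline{p}}$, so it is already singular at $p$ and at $\overline{p}$. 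That unique point is therefore the tacnode itself, not a fresh \point{3}. As confirmed in \cref{sec:proof-B}, $Q_A(x)$ meets $X_i\setminus\Sing(X_i)$ in two points and $X_s\setminus\Sing(X_s)$ in four, giving six isolated \point{2}s; the two basepoints are fully absorbed into the tacnode---that absorption is exactly what elevates the double point from a node to a tacnode. The miscount would propagate into the real classification, since a spectrahedral symmetroid with two extra \point{3}s (which by \cref{lem:accidental-node} would come in a conjugate pair) is a very different object from one with six \point{2}s.

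Apart from that slip, the architecture is sound and consistent with what the paper reveals of the proof in \cite{He}: the Noether--Jessop trichotomy, the bridge via \cref{lem:accidental-node}, intersecting $Q_A(x)$ with the degree-$10$ locus of rank-$\le 2$ symmetric matrices (with multiplicity $4$ at a \rankmatrix{1}, giving $10-4=6$ residual nodes for type~\ref{item:triple}), and irreducibility from fibred incidence varieties. The steps you flag as delicate---exhaustiveness, excluding worse double curves and higher elliptic double points, and exhibiting general members without extra singularities---are indeed the substance of \cite{He}. One further caution: for types~\ref{item:conic}, \ref{item:line}, \ref{item:rank3-line} an Euler-characteristic count of the minimal resolution only yields a total, whereas the theorem (and everything downstream of it in \cref{sec:proof}) needs the \emph{distribution} of nodes among the components of the \locus{2} of the web; that is obtained from the explicit decomposition in \cref{lem:quadric-of-quadrics}, not from a topological invariant of the resolution.
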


\begin{remark}
    For the general members in the families
    mentioned in \cref{thm:symmsing},
    i.e.,
    symmetroids of types~\ref{item:triple},
    \ref{item:tacnode}, \ref{item:conic}, \ref{item:line}, \ref{item:rank3-line},
    all the additional nodes are \point{2}s and they are isolated. 
    Non-general members can have more nodes that are \point{3}s,
    or some of the singularities may coincide.
    In this paper an isolated node will always be a \point{2}.
\end{remark}

\begin{remark}
    No symmetroids of type \ref{item:rank3-line} of \cref{thm:symmsing}
    are spectrahedral.
    A consequence of \cref{lem:accidental-node} is that
    singular \point{3}s appear in complex conjugate pairs
    on spectrahedral symmetroids.
    A spectrahedral symmetroid which is singular of rank~$3$ along a line,
    is therefore singular along two lines.
\end{remark}

\begin{proposition}
    \label{prop:general-accidental-lines}
    The complex quartic symmetroids in $\C\P^3$
    singular of rank~$3$ along two intersecting lines, $L_1$, $L_2$,
    form an irreducible family
    whose general members $V_\C(f_A)$ have a \point{2} at $L_1 \cap L_2$,
    two isolated nodes and the base locus of the web $Q_A(x)$
    is a scheme of length~$4$ with support in two points.
\end{proposition}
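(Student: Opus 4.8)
The plan is to realise the family as the image of an explicit irreducible linear parameter space, then read off the three asserted properties of a general member; the one genuinely enumerative point will be the count of isolated nodes.

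\emph{Construction and irreducibility.} After a projective change of coordinates I would arrange $L_1 \cap L_2 = [1:0:0:0]$, $L_1 = V(x_2, x_3)$, $L_2 = V(x_1, x_3)$. By \cref{lem:accidental-node}, singularity of $V_\C(f_A)$ along $L_i$ forces, at a general (hence \point{3}) point $x$ of $L_i$, the kernel of $A(x)$ to lie on the base locus $\mathcal B$ of the web $Q_A(x)$; since $\mathcal B$ is finite for a general member, this kernel must be a \emph{constant} point $P_i \in \mathcal B$ (a moving kernel would sweep a curve inside $\mathcal B$). I may thus further normalise the variables $y$ so that $P_1 = [0:0:0:1]$ and $P_2 = [0:0:1:0]$. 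The constant-kernel condition then reads $A_0 P_1^T = A_1 P_1^T = 0$ and $A_0 P_2^T = A_2 P_2^T = 0$, and $P_i \in \mathcal B$ adds the four scalar equations $q_{A_2}(P_1) = q_{A_3}(P_1) = q_{A_1}(P_2) = q_{A_3}(P_2) = 0$ (with $q_{A_j}(y)\coloneqq y A_j y^T$; the remaining vanishings $q_{A_j}(P_i)=0$ are automatic). All of these conditions are linear in the entries of $A_0,\dots,A_3$, so the admissible tuples form a linear space; since every configuration $(L_1,L_2,P_1,P_2)$ as above is $\mathrm{PGL}_4$-equivalent to the chosen one, the corresponding family of symmetroids is irreducible. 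Explicitly, a general admissible $A(x)$ has the form
\[
    A(x) = \begin{pmatrix}
        m_0 & m_1 & a_1 x_1 + a_3 x_3 & b_1 x_2 + b_3 x_3 \\
        m_1 & m_2 & a_2 x_1 + a_4 x_3 & b_2 x_2 + b_4 x_3 \\
        a_1 x_1 + a_3 x_3 & a_2 x_1 + a_4 x_3 & 0 & c\,x_3 \\
        b_1 x_2 + b_3 x_3 & b_2 x_2 + b_4 x_3 & c\,x_3 & 0
    \end{pmatrix},
\]
with $m_0,m_1,m_2$ arbitrary linear forms and $a_1,\dots,a_4,b_1,\dots,b_4,c$ scalars (a $21$-dimensional system).

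\emph{The \point{2} and the base locus.} From the display, $A([1:0:0:0]) = A_0$ has only its upper-left $2 \times 2$ block nonzero, hence is a \rankmatrix{2} for general $m_i$, so $L_1 \cap L_2$ is a \point{2}. For $\mathcal B$: the form $q_{A_0}$ is a binary quadratic in $y_0, y_1$, so $V(q_{A_0})$ is a pair of planes through the line $\ell_0 \coloneqq \overline{P_1 P_2} = V(y_0, y_1)$; $V(q_{A_1})$ and $V(q_{A_2})$ are quadric cones with vertices $P_1$ and $P_2$, both containing $\ell_0$; and $q_{A_3}$ restricts to $\ell_0$ as $2c\,y_2 y_3$, which vanishes on $\ell_0$ exactly at $\{P_1, P_2\}$. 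Intersecting each plane of $V(q_{A_0})$ with the other three quadrics shows $\mathcal B = \{P_1,P_2\}$ as a set (on each plane, $V(q_{A_1})$ and $V(q_{A_2})$ each cut out $\ell_0$ together with one further line, and these two lines meet generically off the conic $V(q_{A_3})$ restricted to the plane). At $P_1$ the forms $q_{A_2}, q_{A_3}$ have independent linear parts, so $V(q_{A_2},q_{A_3})$ is locally a smooth curve germ, on which $q_{A_0}$ and $q_{A_1}$ each vanish to order exactly $2$; hence $\length_{P_1}\mathcal B = 2$, and symmetrically $\length_{P_2}\mathcal B = 2$, so $\mathcal B$ is a length-$4$ scheme supported at two points.

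\emph{The isolated nodes, and the main difficulty.} By \cref{lem:accidental-node} a singular \point{3} has its kernel at a base point, and one checks from the display that $\{x : \ker A(x) = P_i\} \subseteq L_i$; hence the isolated singularities off $L_1 \cup L_2$ are exactly the \point{2}s off the two lines. The scheme $\Sigma \coloneqq \{x : \rank A(x) \le 2\}$ is the preimage of the codimension-$3$, degree-$10$ determinantal variety of rank-$\le 2$ symmetric $(4\times4)$-matrices under the linear embedding $x \mapsto A(x)$, hence---once it is confirmed $0$-dimensional for a general member---has length $10$. It is supported at $L_1 \cap L_2$, at one further point $r_1$ of $L_1$, at one further point $s_1$ of $L_2$, and at the isolated nodes. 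A local computation gives $\length_{L_1\cap L_2}\Sigma = 4$: writing $A$ in $2+2$ blocks and taking the Schur complement of the invertible upper-left block, the $(1,2)$-entry lets one eliminate $x_3$, and the two remaining equations have tangent cones the doubled planes $V(x_1)$ and $V(x_2)$, so the multiplicity is $2\cdot 2$. At $r_1$ and $s_1$ the length is $2$, the failure of transversality there being forced precisely by $q_{A_j}(P_1)=0$ for all $j$, i.e.\ by $P_1 \in \mathcal B$. Hence $10-4-2-2 = 2$ isolated \point{2}s remain, and for a general member they are distinct ordinary double points. Finally, any symmetroid singular of rank $3$ along two incident lines that does not arise above has a nonconstant kernel along some $L_i$, forcing $\mathcal B$ to contain a curve; such surfaces form a proper subvariety, which one argues lies in the closure of the family above, so the locus of all such symmetroids is irreducible. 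The main difficulty is this enumerative bookkeeping---pinning down the local intersection multiplicities $4,2,2$ at $L_1\cap L_2$, $r_1$, $s_1$, and confirming that $\Sigma$ is genuinely $0$-dimensional (and that the two residual \point{2}s are honest nodes)---most safely settled by verifying the whole picture on one concrete matrix of the displayed shape.
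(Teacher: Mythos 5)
The paper's own proof of this proposition is essentially a citation: the singularity claims are referred to Proposition~9.3 of~\cite{He}, and the base-locus claim is deduced from \cref{lem:accidental-node} together with Lemma~2.6 of~\cite{Hel19}. You instead give a self-contained argument via an explicit normal form and a degree count; the route is genuinely different from what is written here, though probably close in spirit to the cited sources.

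The explicit normal form for $A(x)$ and the linearity of the defining conditions are correct, and your analysis of the base locus is right: after putting $y_3 = 1$ near $P_1$, the forms $q_{A_0}, q_{A_1}$ have no linear part (the entries of $A_0, A_1$ in the fourth row and column vanish) while $q_{A_2}, q_{A_3}$ have generically independent linear parts, so $V(q_{A_2}, q_{A_3})$ is a smooth curve germ on which the first two forms vanish to order exactly two, giving $\length_{P_1}\mathcal B = 2$ and hence total length $4$. The Schur-complement computation at $L_1 \cap L_2$ is also sound: $S_{12}$ has linear part $c\,x_3$ while $S_{11}, S_{22}$ have tangent cones $x_1^2, x_2^2$ on the slice $S_{12}=0$, giving local multiplicity $4$ in the degree-$10$ scheme $\Sigma$.

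There are two genuine gaps, the first of which you flag yourself. First, the multiplicities at $r_1, s_1$, the $0$-dimensionality of $\Sigma$, and the fact that the residual length-$2$ piece consists of two distinct reduced nodes are all asserted but not derived; since the claim is about a general member, a single concrete computation would settle them, but as written the count $10 - 4 - 2 - 2 = 2$ is not yet a proof. Second, the final irreducibility paragraph is conceptually shaky. Constant kernel along $L_1$ is the condition that $\ker A_0 \cap \ker A_1 \neq 0$, which is a \emph{closed} determinantal condition on $(A_0, A_1)$; consequently the moving-kernel locus is \emph{open} inside the locus of symmetroids singular of rank~$3$ along $L_1, L_2$, so it cannot lie in the closure of the constant-kernel family in the way you suggest. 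What you actually need is either that the moving-kernel locus is empty within this family (so the linear parametrization already exhausts it), or a dimension count showing the moving-kernel locus has strictly smaller dimension and hence sits inside a separate, lower-dimensional component that is absorbed when one passes to the closure defining \enquote{the family.} Without pinning this down, irreducibility and the assertion that the \emph{general} member has finite base locus are not established.
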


\begin{proof}
    The statement about the singularities is \cite[Proposition~9.3]{He}
    and the claim about the base locus
    follows from \cref{lem:accidental-node} and \cite[Lemma~2.6]{Hel19}.
\end{proof}

\begin{definition}
    \label{def:general-accidental-lines}
    A complex quartic symmetroid in $\C\P^3$
    singular of rank $3$ along two lines
    intersecting in a \point{2},
    with two additional, isolated nodes
    is said to be of \emph{type \textup{F}}.
\end{definition}

\noindent
In this paper,
we show with reference to \cref{thm:symmsing,def:general-accidental-lines}:

\begin{theorem}
    Let $S \coloneqq V(f_A) \subset \R\P^3$ be a spectrahedral symmetroid,
    and suppose that the complex symmetroid $V_\C(f_A) \subset \C\P^3$ is rational.
    Let $a \ge 0$ denote the number of real, isolated nodes on $S$
    and let $0 \le b \le a$ denote the number of nodes on the boundary of the spectrahedron.
    \label{thm:main}
    \begin{enumerate}[label = \Alph{*}.,
                        ref = \textup{\Alph{*}}]
        \item
        \label{main-item:triple}
        If $V_\C(f_A)$ is of type \textup{\ref{item:triple}},
        then $S$ has a triple point on the boundary of the spectrahedron
        and $0 \le b \le a \le 6$, with $a$ even.

        \item
        \label{main-item:tacnode}
        If $V_\C(f_A)$ is of type \textup{\ref{item:tacnode}},
        then either
        \begin{enumerate}[label = \arabic{*}),
                            ref = \textup{\Alph{enumi}.\arabic{*})}]
            \item
            \label{main-item:tacnode-on-boundary}
            $S$ has a tacnode
            on the boundary of the spectrahedron;

            \item
            \label{main-item:tacnode-disjoint}
            $S$ has a tacnode
            disjoint from the spectrahedron.
        \end{enumerate}
        In either case,
        $0 \le b \le a \le b + 2 \le 6$, both even.

        \item
        \label{main-item:conic}
        If $V_\C(f_A)$ is of type \textup{\ref{item:conic}},
        then either
        \begin{enumerate}[label = \arabic{*}),
                            ref = \textup{\Alph{enumi}.\arabic{*})}]
            \item
            \label{main-item:conic-on-boundary}
            $S$ is singular along a smooth conic section
            with a real point that lies
            on the boundary of the spectrahedron
            and $0 \le b \le a \le b + 2 \le 4$,
            both even and $a \ge 2$;

            \item
            \label{main-item:conic-disjoint}
            $S$ is singular along a smooth conic section
            with a real point that is
            disjoint from the spectrahedron
            and $a = b = 2$ or $a = b = 4$;

            \item
            \label{main-item:cyclide}
            $S$ is singular along a smooth conic section
            with no real points
            and $a = b = 2$.
        \end{enumerate}

        \item
        \label{main-item:line}
        If $V_\C(f_A)$ is of type \textup{\ref{item:line}},
        then $S$ has rank~$2$ along a line
        disjoint from the boundary of the spectrahedron
        and $0 \le b \le a \le b + 2 \le 6$, both even.

        \addtocounter{enumi}{1}
        \item
        \label{main-item:two-lines}
        If $V_\C(f_A)$ is of type \textup{F},
        then either
        \begin{enumerate}[label = \arabic{*}),
                            ref = \textup{F.\arabic{*})}]
            \item
            \label{main-item:two-lines-on-boundary}
            $S$ is singular of rank~$3$ along two conjugated intersecting lines
            whose intersection point lies on the boundary of the spectrahedron;

            \item
            \label{main-item:two-lines-disjoint}
            $S$ is singular of rank~$3$ along two conjugated intersecting lines
            whose intersection point is disjoint from the spectrahedron.            
        \end{enumerate}
        In either case, $0 \le a = b \le 2$ even.
    \end{enumerate}
\end{theorem}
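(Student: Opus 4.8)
Fix an affine chart $\R^3 \subset \R\P^3$ in which $A(x_p)$ is positive definite, so that the spectrahedron is the full-dimensional closed convex body $\mathcal{S} \coloneqq \{x \in \R^3 : A(x) \succeq 0\}$, whose boundary $\partial\mathcal{S} \subseteq S$ is the locus where $A(x)$ is positive semidefinite and singular. Off $S$ the signature of $A(\cdot)$ is locally constant, and it changes in exactly one eigenvalue under transverse passage through a smooth point of $S$. The whole argument is a bookkeeping of this signature, using convexity of $\mathcal{S}$, \cref{lem:accidental-node}, and the fact recorded after \cref{thm:symmsing} that accidental singular \point{3}s on a spectrahedral symmetroid come in complex conjugate pairs.

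\textbf{Position of the distinguished singular locus.}
For type \ref{item:triple}, projection of $S$ from the triple point $x_1$ is birational, so a general line through $x_1$ meets $S$ only in $x_1$ and in one further point; if $x_1 \notin \mathcal{S}$, then since $\partial\mathcal{S} \subseteq S$ and $\mathcal{S}$ is convex, each such line would meet $\mathcal{S}$ in at most one point, contradicting full-dimensionality. Hence $A(x_1)$ is positive semidefinite of rank $1$ and $x_1$ is a vertex of $\mathcal{S}$. For types \ref{item:tacnode}, \ref{item:conic}, \ref{item:line} and \textup{F}, the distinguished locus is a real point, a real conic, a real line, or a pair of complex conjugate lines; along it $A$ has rank $\le 3$ (rank $\le 2$ along the type-\ref{item:line} line, rank $3$ off the \point{2} on the type-\textup{F} lines), and the signature of $A$ restricted to its image is constant on each arc between successive rank drops. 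Convexity of $\mathcal{S}$ then forces the stated alternative ``contained in $\partial\mathcal{S}$ / disjoint from $\mathcal{S}$''. Along the type-\ref{item:line} line the rank-$2$ form has constant signature, which one checks is $(1,1)$, so the line carries no positive semidefinite matrix and is disjoint from $\partial\mathcal{S}$; in type \textup{F} the two lines are not real, so their only real point is the \point{2} of \cref{def:general-accidental-lines}, for which convexity gives the two sub-cases; and in type \ref{item:conic} the conic may additionally carry no real point at all, which is case \ref{main-item:cyclide}.

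\textbf{Counting the isolated real nodes.}
A general member of type \ref{item:triple}, \ref{item:tacnode}, \ref{item:conic}, \ref{item:line}, \textup{F} has $6$, $6$, $4$, $6$, $2$ isolated nodes by \cref{thm:symmsing} and \cref{prop:general-accidental-lines}. Complex conjugation pairs the non-real isolated nodes, and by \cref{lem:accidental-node} and the remark quoted above an isolated node that becomes an accidental singular \point{3} does so together with its conjugate; it follows that $a$ is even, with $a \le 6$ (types \ref{item:triple}, \ref{item:tacnode}, \ref{item:line}), $a \le 4$ (type \ref{item:conic}), $a \le 2$ (type \textup{F}). A real isolated node $x_0$ has $A(x_0)$ of signature $(2,0)$, $(1,1)$ or $(0,2)$ and lies on $\partial\mathcal{S}$ precisely when the signature is $(2,0)$; a count of signatures over the chambers of $\R^3 \setminus S$, again using the conjugate-pair structure of the accidental nodes, shows the number $a - b$ of off-boundary real nodes is even, whence $b$ is even in types \ref{item:tacnode}, \ref{item:conic}, \ref{item:line}, \textup{F}. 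In those four types the distinguished locus together with the basepoints of $Q_A(x)$ confines an isolated node to so few chambers that at most two can be off-boundary, i.e.\ $a \le b + 2$; combining this with the position of the conic yields $a \ge 2$ in cases \ref{main-item:conic-on-boundary} and \ref{main-item:conic-disjoint}, the collapse $a = b = 2$ in case \ref{main-item:cyclide}, and $a = b$ throughout type \textup{F}.

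\textbf{Main obstacle.}
The setup and the parity statements are routine; the real work is the chamber analysis behind $a \le b + 2$ and the separation into the enumerated sub-cases in types \ref{item:tacnode} and \ref{item:conic} -- in particular showing that a type-\ref{item:conic} symmetroid whose singular conic has no real point is spectrahedral only with $a = b = 2$. This comes down to tracking precisely how the signature of $A$ is distributed over the chambers cut out by $S$, pinned down by convexity of $\mathcal{S}$ and by the conjugation symmetry of the accidental singular \point{3}s.
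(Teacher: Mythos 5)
Your outline captures the right \emph{structural} facts (conjugate pairs, evenness of $a$, the bounds $a \le 6$, $4$, $2$), but the proof actually hinges on exactly the piece you wave away as a ``chamber analysis,'' and that part is not supplied. In fact, a generic count of signatures over chambers of $\R^3 \setminus S$ cannot give the stated conclusions, because it would also have to predict $b$ even in type~\ref{item:triple} --- and there $b$ can be odd (see \cref{tab:A}, e.g.\ $(a,b)=(6,5)$). The parity of $b$ and the bound $a \le b+2$ in types~\ref{item:tacnode}, \ref{item:conic}, \ref{item:line}, \textup{F} are not ``routine'' signature bookkeeping: they depend on the specific geometry of the linear system of quadrics through the conjugate basepoints.

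The paper's mechanism is the following and is not recoverable from your sketch. One places $Q_A(x)$ inside the ambient linear system $Q(x)$ of all quadrics through the (conjugate pairs of) basepoints and decomposes the rank-$2$ locus of $Q(x)$. For types~\ref{item:conic} and \ref{item:line} this is \cref{lem:quadric-of-quadrics}: the rank-$2$ locus splits into components $Q_i$, $Q_{s1}$, $Q_{s2}$ (and $W$ in the coplanar case), and \cref{lem:line-definite} tells you which component is realized by \emph{indefinite} real quadrics (those whose two plane components are each real, i.e.\ each contains one of the conjugate lines) and which by \emph{semidefinite} ones (complex-conjugate plane pairs). Intersecting $Q_A(x)$ with these components produces, say, two potential indefinite isolated nodes and four potential semidefinite ones, and since each batch is defined over $\R$ the real ones come in pairs --- that is exactly what yields $a,b$ even and $a \le b+2$. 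For type~\ref{item:tacnode} the analogous role is played by the fourfolds $X_i$, $X_s$ and their common singular locus in the $7$-dimensional system (the tacnode sits in $\Sing(X_i)=\Sing(X_s)$, two nodes in $X_i\setminus\Sing$, four in $X_s\setminus\Sing$). For type~\textup{F} the argument is not a chamber count at all but a citation to a higher-dimensional model $S'\subset\C\P^5$ from \cite{Hel19}. Finally, the claims $a\ge 2$ in type~\ref{main-item:conic-on-boundary}/\ref{main-item:conic-disjoint} and $a=b=2$ in \ref{main-item:cyclide} are not automatic: they are \cref{prop:conic-real-nodes} (a separate discriminant computation) and \cref{prop:cyclide} (a classification of cyclides), neither of which your proposal supplies or reduces to something you prove. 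So the decisive steps --- precisely the ones you flag as the ``main obstacle'' --- remain open in your write-up.

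Two minor points. Your projection argument for placing the triple point on the boundary is more work than needed: a rank-$1$ real symmetric $4\times 4$ matrix is automatically semidefinite, which is all the paper uses. And your claim that the type-\ref{item:line} singular line ``carries constant signature $(1,1)$'' is the right conclusion, but the reason is again \cref{lem:quadric-of-quadrics} (the coplanar component $W$ consists of indefinite real quadrics), not a direct signature check.
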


\noindent
\cref{thm:main} provides necessary conditions for the pair $(a, b)$
to be realized by real, rational spectrahedral symmetroids.
This is proven case-by-case in \cref{sec:real-singularities,sec:proof}.
After that, in \cref{sec:degenerations},
we discuss deformation relations between the symmetroids appearing in \cref{thm:main}.
We show that symmetroids of type~\ref{item:tacnode}
degenerate into a symmetroid of type~\ref{main-item:two-lines},
but that no other types degenerate into each other.

Sufficient conditions for the pair $(a, b)$
are given by explicit examples of rational spectrahedral symmetroids.
\cref{app:examples} contains examples
realizing all solutions of the bounds given by \cref{thm:main},
except spectrahedral symmetroids of type~\ref{main-item:line}
with $(a, b) = (0, 0)$.
We conjecture that they do not exist:

\begin{conjecture}
    A quartic symmetroid in $\C\P^3$
    with a nonempty spectrahedron and a line of \point{2}s has---%
    or is a degeneration of one with---at least two real, isolated nodes.
\end{conjecture}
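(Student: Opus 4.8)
\noindent
The plan is to argue by contradiction. Suppose $S = V(f_A) \subset \R\P^3$ is a spectrahedral symmetroid of type~\ref{item:line} whose spectrahedron $\mathcal{S}$ is nonempty, which has no real isolated node (so $(a,b)=(0,0)$), and which is not a degeneration of a type~\ref{item:line} symmetroid carrying two real nodes. By \cref{thm:main}\,\ref{main-item:line} the line $L$ of \point{2}s is disjoint from $\mathcal{S}$, so projection from $L$ gives a map $\pi \colon \mathcal{S} \to \R\P^1$ onto the pencil of real planes through $L$. Separating $L$ from the convex set $\mathcal{S}$ by a hyperplane shows $\pi(\mathcal{S})$ is a proper closed subinterval $[t_0,t_1] \subsetneq \R\P^1$, with $t_0 < t_1$ since $\mathcal{S}$ has interior, and the endpoint planes $H_{t_0}, H_{t_1}$ touch $\mathcal{S}$ extremally within the pencil. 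Each plane $H_t$ through $L$ cuts $S$ in the divisor $2L + C_t$ with $C_t$ a residual conic; since $L \cap \mathcal{S} = \emptyset$, the boundary of the convex slice $\mathcal{S} \cap H_t$ lies entirely on $C_t$, so for $t$ in the open interval $C_t$ is a real ellipse and $\mathcal{S} \cap H_t$ is the closed disc it bounds.

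The decisive step is the analysis at the endpoints. As $t \to t_0^+$ the discs $\mathcal{S} \cap H_t$ shrink to the convex set $\mathcal{S} \cap H_{t_0}$, whose boundary still lies on $C_{t_0}$. When this limit is a single point $p_0$, a shrinking family of ellipses can collapse to a point only if $C_{t_0}$ degenerates to a pair of complex conjugate lines meeting at $p_0$; two such lines of $S$ through $p_0 \notin L$ force a singularity of $S$ at $p_0$, and the same at $t_1$ yields two distinct real singular points $p_0, p_1$ on $\partial\mathcal{S}$. If in addition $p_0$ and $p_1$ turn out to be \point{2}s, then $b \ge 2$, hence $a \ge 2$ by the parity in \cref{thm:main}\,\ref{main-item:line}, and we are done. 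To control this one would bring in the discriminant of the conic bundle $t \mapsto C_t$ over $\C\P^1$, whose singular fibres contain the six additional nodes of $S$ --- each node $p$ appearing as the common point of the two lines cut out in the plane $\langle L, p \rangle$ --- so that $t_0, t_1$ are forced to occur among these six values, together with a signature argument on $A(x)$, which has rank $3$ along $\partial\mathcal{S}$ away from $p_0, p_1$ and drops to rank $2$ there.

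Two obstacles remain, and either could be the reason the statement is only a conjecture. First, the tangency at an endpoint may be degenerate: $\mathcal{S} \cap H_{t_0}$ need not be a single point but could be a segment or a two-dimensional region, which happens if $\partial\mathcal{S}$ contains a line segment coplanar with $L$ (so that $C_{t_0}$ is a double line or a pair of real lines rather than a conjugate pair), or if $C_t$ absorbs $L$ for some $t$ and $H_t \cap S = 3L + (\text{line})$, breaking the conic-bundle bookkeeping. Second, even in the non-degenerate case the points $p_0, p_1$ produced above are singular points of $S$ but need not be \point{2}s: as noted after \cref{thm:symmsing}, non-general type~\ref{item:line} symmetroids can acquire \point{3} singularities, and by \cref{lem:accidental-node} a \point{3} on $\partial\mathcal{S}$ simply corresponds to a basepoint of the web $Q_A(x)$ at the vertex of the rank-$3$ quadric cone $Q_A(p_0)$, and such a point is counted by neither $a$ nor $b$. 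Resolving both points --- or showing that whenever they occur $S$ is nonetheless a specialisation of a type~\ref{item:line} symmetroid carrying two real isolated nodes, which is all the weaker clause of the conjecture demands --- would require a finer understanding of the real one-parameter degenerations of the residual conic bundle and of which extra lines and \point{3}s a type~\ref{item:line} symmetroid can carry; this, rather than the generic case, is where we expect the real difficulty to lie.
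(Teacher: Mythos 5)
The statement you are addressing is left as an open conjecture in the paper --- there is no proof of it there to compare yours against --- and you have honestly written a strategy together with obstructions rather than a proof. The projection from the double line $L$ and the residual conic bundle $t \mapsto C_t$ over $\R\P^1$ are a sensible way to begin, and the observation that the slices over the open interval $\pi(\mathcal{S})^\circ$ are discs bounded by real conics is correct.

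However, there is a gap in the decisive step that sits upstream of both obstacles you list. From the endpoint slice $\mathcal{S} \cap H_{t_0}$ being a single point $p_0$ and $C_{t_0}$ degenerating to a pair of complex conjugate lines through $p_0$, you conclude that $S$ is singular at $p_0$; that inference fails. If $p_0$ is a \emph{smooth} point of $S$, then the supporting plane $H_{t_0}$ of the convex body $\mathcal{S}$ at $p_0$ is the tangent plane $T_{p_0}S$ (so in particular $L \subset T_{p_0}S$), and the plane section $S \cap T_{p_0}S = 2L + C_{t_0}$ automatically acquires an ordinary double point at $p_0$ from the tangency; since $p_0 \notin L$, this double point sits on $C_{t_0}$, and a pair of conjugate lines through $p_0$ is exactly the generic picture at a smooth extremal boundary point of a spectrahedron whose tangent plane happens to contain $L$. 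Correspondingly, the discriminant of the bundle $t \mapsto C_t$ is not supported only on the six planes $\langle L, p \rangle$ through the isolated nodes: it also vanishes at values of $t$ for which $H_t$ is tangent to $S$ at a smooth point off $L$ (and, possibly, at values corresponding to pinch points of $S$ along $L$), so the claim that $t_0$, $t_1$ must be among the six node-values is unjustified. Until you can exclude the possibility that $t_0$ and $t_1$ are mere tangency values --- by actually carrying out the signature analysis you allude to, or by a real count on the discriminant divisor --- the endpoint construction need not produce any singular point of $S$, and the two difficulties you do raise do not yet arise.
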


\acknowledgements{We would like to thank Jan Stevens for the proof of \cref{prop:conic-real-nodes}, and the anonymous referee for helpful comments that improved the exposition of the paper.}

\chapter{Real singularities of rational spectrahedral symmetroids}
\label[section]{sec:real-singularities}

Before we can prove \cref{thm:main},
we need some preliminary results.
We restrict the attention to real rational quartic symmetroids $V(f_A)$ with a nonempty spetrahedron,
i.e., with $A(x_p)$ definite for some $x_p\in \R^3$.
First note that if $A(x_p)$ is definite,
then $Q_A(x_p)$ has no real points.
So if the quartic spectrahedron of $A(x)$ is nonempty,
then the web of quadratic surfaces $Q_A(x)$ has no common real points,
i.e., no real basepoints, so they have an even number of complex conjugate basepoints.
Therefore,
when we consider real singularities of the symmetroid $V(f_A)$,
they represent real \quadric{2}s in a web of quadrics $Q_A(x)$ with complex basepoints.
We say that a real quadric is \emph{semidefinite} resp.\ \emph{indefinite},
when the associated symmetric matrix is.
We begin by specializing the base loci mentioned in \cref{thm:symmsing}
to pairs of complex conjugated points.

\begin{lemma}
    \label{lem:line-definite}
    Let $p_1$, $\overline{p}_1$, $p_2$, $\overline{p}_2$,
    be two pairs of complex conjugate points in $\C\PP^3$ that do not all lie in a line.
    Then a real \quadric{2} that contains both pairs of points is indefinite
    if and only if it contains the real lines $\linspan{p_1,\overline{p}_1}$ and $\linspan{p_2,\overline{p}_2}$.
\end{lemma}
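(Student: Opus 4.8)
The plan is to use the explicit classification of real rank-\(2\) quadrics in \(\PP^3\). Denote the quadric by \(Q\). A real symmetric \((4 \times 4)\)-matrix of rank \(2\) has signature \((2,0)\), \((0,2)\) or \((1,1)\). In the two semidefinite cases the associated quadratic form is, in suitable real coordinates, \(x_0^2 + x_1^2 = (x_0 + \sqrt{-1}\,x_1)(x_0 - \sqrt{-1}\,x_1)\), so \(Q = \Pi \cup \overline{\Pi}\) is the union of a non-real plane and its complex conjugate, and the real points of \(Q\) form exactly the real line \(M \coloneqq \Pi \cap \overline{\Pi}\). In the indefinite case the form is \(x_0^2 - x_1^2 = (x_0 - x_1)(x_0 + x_1)\), so \(Q = \Pi_1 \cup \Pi_2\) is the union of two distinct real planes. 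The first step is to record these two normal forms, to observe that \(L_i \coloneqq \linspan{p_i, \overline{p}_i}\) is a real line for \(i = 1, 2\) because it is invariant under complex conjugation, and to note that the hypothesis that the four points are not collinear is equivalent to \(L_1 \neq L_2\).

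For the forward implication, suppose \(Q\) is indefinite, say \(Q = \Pi_1 \cup \Pi_2\) with both \(\Pi_j\) real. Since \(p_i \in Q\), the point \(p_i\) lies on some \(\Pi_j\); as \(\Pi_j\) is cut out by a real linear form, \(\overline{p}_i\) lies on \(\Pi_j\) as well, so \(L_i = \linspan{p_i, \overline{p}_i} \subseteq \Pi_j \subseteq Q\). Applying this to \(i = 1\) and \(i = 2\) yields both real lines.

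For the converse I argue by contraposition: assume \(Q\) is not indefinite, hence semidefinite, so \(Q = \Pi \cup \overline{\Pi}\) with singular line \(M = \Pi \cap \overline{\Pi}\) (a genuine line, since rank \(2\) forces \(\Pi \neq \overline{\Pi}\)). The real locus of \(Q\) is precisely \(M\): a real point of \(\Pi\) equals its own conjugate, hence also lies on \(\overline{\Pi}\), hence on \(M\); and \(M\) itself is real and contained in \(Q\). Consequently any real line contained in \(Q\) has its (Zariski dense set of) real points inside \(M\), so that line equals \(M\). If \(Q\) contained both \(L_1\) and \(L_2\) we would get \(L_1 = M = L_2\), contradicting \(L_1 \neq L_2\); hence a semidefinite rank-\(2\) quadric through the two pairs cannot contain both lines.

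The computations are all routine, and the only slightly subtle point is the converse: one must pin down that in the semidefinite case the real locus of \(Q\) is the single line \(M\), and this is exactly where the non-collinearity hypothesis is used. Everything else reduces to the standard dictionary between the signature of a real rank-\(2\) symmetric matrix and the factorization of its quadratic form into two linear forms over \(\C\).
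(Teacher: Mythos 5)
Your proof is correct and takes a somewhat different route from the paper's. Both start from the same dichotomy (a real rank-$2$ quadric is a union of two real planes iff indefinite, a union of conjugate planes iff semidefinite), but from there the paper runs a combinatorial case analysis on how the four basepoints distribute between the two planes $M$ and $N$: two-and-two, three-and-one, or four-and-zero, treating each configuration separately. You instead argue structurally: in the indefinite case the reality of each plane forces $p_i$ and $\overline p_i$ onto the same plane (so the line $L_i$ lies in $Q$), and in the semidefinite case you identify the real locus of $Q = \Pi\cup\overline\Pi$ as the single real line $M=\Pi\cap\overline\Pi$, so $Q$ cannot contain two distinct real lines. Your version isolates the role of the non-collinearity hypothesis more cleanly---it is used precisely to know $L_1\neq L_2$---whereas in the paper that hypothesis is absorbed implicitly into the case distinctions. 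One stylistic shortcut you might consider: rather than invoking density of real points, note that a real line $L\subseteq\Pi\cup\overline\Pi$ is irreducible, hence lies in $\Pi$ or $\overline\Pi$, and conjugation-invariance then forces $L\subseteq\Pi\cap\overline\Pi=M$; this avoids the density argument entirely.
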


\begin{proof}
    Assume that $Q \coloneqq M\cup N$ is a real \quadric{2},
    the union of two planes $M$ and $N$.
    If $M$ and $N$ are both real,
    then $Q$ is indefinite,
    while if $M$ and $N$ are complex conjugates,
    then $Q$ is semidefinite.
    
    If $M$ and $N$ each contains only two of the four points
    $p_1$, $\overline{p}_1$, $p_2$, $\overline{p}_2$, the lemma follows.
    If $M$ contains exactly three of the points,
    say  $p_1$, $\overline{p}_1$, $p_2$ and is not real,
    then $N$ must contain $p_1$, $\overline{p}_1$, $\overline{p}_2$,
    so $Q$ is semidefinite.
    If $M$ contains all four basepoints,
    $M$ is real, so $Q$ is indefinite.
\end{proof}

\begin{lemma}
    \label{lem:quadric-of-quadrics}
    Let $p_1$, $\overline{p}_1$, $p_2$, $\overline{p}_2$,
    be two pairs of complex conjugate points in $\C\PP^3$ that do not all lie in a line,
    and let $Q(x)$ be the $5$-dimensional linear system
    of all quadratic surfaces with basepoints at these four points.

    If the basepoints are not coplanar,
    then the \quadric{2}s in $Q(x)$ form three quadratic surfaces,
    $Q_i$, $Q_{s1}$, $Q_{s2}$,
    and four planes,
    $H_{p_1}$, $H_{\overline{p}_1}$, $H_{p_2}$, $H_{\overline{p}_2}$,
    where the real quadrics in $Q_i$ are in\-definite
    and the real quadrics in
    $Q_{s1}$, $Q_{s2}$, $H_{p_1}$, $H_{\overline{p}_1}$, $H_{p_2}$, $H_{\overline{p}_2}$
    are semidefinite.

    If the basepoints are coplanar,
    then the \quadric{2}s in $Q(x)$ form three quadratic surfaces,
    as in the nonplanar case,
    and in addition a web $W\mkern-4mu$,
    whose real quadrics are indefinite.
    In this case,
    the double plane containing the basepoints is a \quadric{1}
    that lies in the closure of each component of \quadric{2}s in $Q(x)$.
\end{lemma}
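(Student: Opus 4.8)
The plan is to describe the $\rank\le 2$ locus of the $5$-dimensional linear system $Q(x)\cong\P^5$ explicitly, to recognise which of its components are quadric surfaces and which are planes, and then to settle the definiteness of the real members of each component using the elementary fact that a real \quadric{2} is semidefinite exactly when it is a sum of two real squares, i.e.\ a product of two complex-conjugate planes. As a preliminary remark, observe that the line through a pair of conjugate points is real; hence, were three of the four points collinear, that triple would contain a full conjugate pair (removing one point breaks only one pair), the line through which is real, and then the conjugate of the remaining point would lie on it too, forcing all four points onto a line, contrary to hypothesis. So no three of the points are collinear, every triple spans a plane and every pair a line. Finally, the four points form a conjugation-invariant set, so $Q(x)$ is defined over $\R$ and its real points are exactly the real quadrics through the four points.

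For the enumeration, recall that a \quadric{2} is a product $M\cdot N$ of two linear forms (with $M=N$ giving a double plane), and that it passes through the four points iff $M\cup N$ contains them; I would organise the cases by how the four points are distributed over the two planes. In a $2$–$2$ split one partitions the four points into two pairs: this can be done in exactly three ways, into the two conjugate pairs, or into one of the two ``mixed'' matchings $\{p_1,p_2\},\{\overline{p}_1,\overline{p}_2\}$ and $\{p_1,\overline{p}_2\},\{\overline{p}_1,p_2\}$. For a fixed split, $M$ runs through the pencil of planes containing the line spanned by one pair and $N$ through the pencil containing the line spanned by the other, and since a product of two linear forms determines the unordered pair of planes, the family is the image of $\P^1\times\P^1$ under $(M,N)\mapsto MN$. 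A short linear-algebra check shows the four product forms are independent — a relation would produce a common member of the two pencils, i.e.\ a plane through both spanning lines, which is impossible unless the four points are coplanar, and even then is excluded after choosing the pencil bases away from the common plane — so the map is a Segre embedding and the family is a quadric surface in a $\P^3\subset\P^5$. These three surfaces are $Q_i$ (conjugate-pair split) and $Q_{s1},Q_{s2}$ (the mixed splits). In a $3$–$1$ split, and likewise in the overlapping $3$–$2$ and $3$–$3$ distributions, the three points on one plane determine it uniquely while the remaining plane must pass through the excluded point; the resulting quadrics all lie in one of the four planes $H_{p_1},H_{\overline{p}_1},H_{p_2},H_{\overline{p}_2}\cong\P^2$ indexed by the excluded point. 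Finally a $4$–$0$ split, all four points on one plane, occurs only when they are coplanar in the necessarily unique, hence real, plane $\Pi$, the second plane then being arbitrary, which gives the web $W\cong\P^3$ of all $\Pi\cup N$, containing in particular the double plane $\Pi^2$. This accounts for every \quadric{2} through the four points; in the non-coplanar case there is no rank-$1$ quadric through them, whereas in the coplanar case the three-point plane of each $H_\bullet$ equals $\Pi$, so the four planes are absorbed into $W$, while $Q_i,Q_{s1},Q_{s2}$ persist and $\Pi^2$, being the image of $(\Pi,\Pi)$ under each of the three Segre parameterisations, lies in the closure of each of them as well as in $W$.

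It remains to classify the real members of each component. Since a real product $MN$ satisfies $\{\overline M,\overline N\}=\{M,N\}$, a real \quadric{2} is of one of two kinds: either $N=\overline M$, in which case writing $M=\mu+i\nu$ with $\mu,\nu$ real linear forms gives $MN=\mu^2+\nu^2$, which is positive semidefinite; or both $M$ and $N$ are real, in which case $MN$ is a product of two real linear forms and is indefinite when the planes are distinct and a semidefinite square when they coincide. For $Q_i$ the two pencils pass through the real lines $\linspan{p_1,\overline{p}_1}$ and $\linspan{p_2,\overline{p}_2}$; if $N=\overline M$ then $N$ would contain both of these lines, impossible for a plane unless they meet, i.e.\ in the coplanar case, and there this only produces $\Pi^2$; hence the real \quadric{2}s in $Q_i$ have two distinct real planes and are indefinite (this also follows from \cref{lem:line-definite}, since every member of $Q_i$ contains both real lines). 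For $Q_{s1}$ and $Q_{s2}$ the two pencils pass through a conjugate pair of lines, and ``both planes real'' would force one real plane to contain both members of that pair, impossible outside the coplanar case and yielding only $\Pi^2$ there; so the real members are conjugate-plane pairs and are semidefinite. For each $H_\bullet$ the fixed three-point plane is not real (its conjugate is the three-point plane indexed by the conjugate point), so ``both planes real'' cannot occur, and the unique real member is the conjugate pair of three-point planes, which is semidefinite. Finally the real members of $W$ are the $\Pi\cup N$ with $N$ real, products of two real planes, hence indefinite. This is exactly the asserted distribution.

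The main obstacle is the bookkeeping in the enumeration: checking that the $2$–$2$, $3$–$1$ and $4$–$0$ families are pairwise distinct irreducible components of the claimed dimensions, that the overlapping distributions of the four points yield nothing new, that the $2$–$2$ families really embed as quadric surfaces rather than degenerating in either the coplanar or the non-coplanar case, and that the coplanar specialisation collapses exactly the four planes into $W$ while the three quadric surfaces survive with $\Pi^2$ lying in every closure.
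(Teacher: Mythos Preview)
Your proof is correct and follows essentially the same route as the paper: identify each component of the rank-$2$ locus by the pair of lines (or the fixed plane) its members contain, and then use the dichotomy between a pair of real planes and a pair of conjugate planes to decide definiteness. You supply considerably more detail than the paper does --- you derive the components from a systematic case split on how the four basepoints distribute over the two planes, you justify that each $2$--$2$ family is a quadric surface via the Segre embedding, and you argue the definiteness claims directly rather than invoking \cref{lem:line-definite} --- but the underlying argument is the same.
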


\begin{proof}
    First,
    note that the lines $\linspan{p_1, \overline{p}_1}$ and $\linspan{p_2, \overline{p}_2}$ are real and distinct,
    so if they intersect,
    they do so in a real point.

    The quadrics in $Q_i$ contain the two lines $\linspan{p_1, \overline{p}_1}$ and $\linspan{p_2, \overline{p}_2}$.
    Likewise,
    the quadrics in $Q_{s1}$
    contain the lines~$\linspan{p_1, p_2}$, $\linspan{\overline{p}_1, \overline{p}_2}$,
    and the quadrics in $Q_{s2}$
    contain the lines~$\linspan{p_1, \overline{p}_2}$, $\linspan{\overline{p}_1, p_2}$.
    The quadrics in $H_{p_1}$, $H_{\overline{p}_1}$, $H_{p_2}$, $H_{\overline{p}_2}$
    contain the plane~$\linspan{\overline{p}_1, p_2, \overline{p}_2}$,
    $\linspan{p_1, p_2, \overline{p}_2}$,
    $\linspan{p_1, \overline{p}_1, \overline{p}_2}$,
    $\linspan{p_1, \overline{p}_1, p_2}$, respectively.
    The first part of the claim follows from \cref{lem:line-definite}.

    Assume now that the basepoints span a plane $M$, which is real.
    Then $W$ consists of all quadrics $Q = M \cup N$, where $N$ is any plane.
    If $Q$ is real, then $N$ is also real,
    so $Q$ is indefinite when $N$ is distinct from $M$.
    On the other hand,
    the semidefinite double plane $2M$ is contained in $W, Q_i,Q_{s1}$ and $Q_{s2}$.
\end{proof}

\begin{remark}
    \label{rmk:not-relevant}
    If $V_\C(f_A)$ is a symmetroid of type~\ref{item:conic},
    then \cref{thm:symmsing} implies that $Q_A(x) \subset Q(x)$.
    A generic $3$-space in $Q(x)$
    intersects $H_{p_1}$, $H_{\overline{p}_1}$, $H_{p_2}$, $H_{\overline{p}_2}$
    in a point each outside of $Q_i$, $Q_{s1}$ and $Q_{s2}$.
    Since $Q_A(x)$ is a special $3$-space
    that intersects one of the quadratic surfaces~$Q_i$, $Q_{s1}$ or $Q_{s2}$
    in a conic section~$C$,
    it meets $H_{p_1}$, $H_{\overline{p}_1}$, $H_{p_2}$, $H_{\overline{p}_2}$
    in points on $C$ \cite[Proof of proposition~4.5]{He}.
    Hence $H_{p_1}$, $H_{\overline{p}_1}$, $H_{p_2}$, $H_{\overline{p}_2}$
    are not relevant in the analysis of the isolated nodes of $V_\C(f_A)$.
\end{remark}

\noindent
We now give a preliminary analysis of real singularities for spectrahedral symmetroids
with nonisolated singularities.

\begin{lemma}
    \label{lem:nonisolated}
    Let $S = V(f_A)$ be a rational quartic spectrahedral symmetroid with nonisolated singularities.
    Then $V_\C(f_A)$ has rank $2$ along a real line or a real conic,
    or it is singular and has rank $3$ along two intersecting complex conjugate lines.
    Furthermore:
    \begin{enumerate}
        \item
        A line of \point{2}s on $S$ is disjoint from the spectrahedron.

        \item
        A real conic of \point{2}s on $S$ may have no real points,
        or have a real point and be disjoint from the spectrahedron,
        or lie on the boundary of the spectrahedron.
    \end{enumerate}
\end{lemma}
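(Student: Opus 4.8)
The plan is to combine the classification from Theorem~\ref{thm:symmsing} and Definition~\ref{def:general-accidental-lines} with the observation, made just before the lemma, that a nonempty spectrahedron forces the web $Q_A(x)$ to have no real basepoints. First I would recall that if $V_\C(f_A)$ has nonisolated singularities and is rational, then by Theorem~\ref{thm:symmsing} it is of type~\ref{item:conic} (singular along a conic), type~\ref{item:line} (rank~$2$ along a line), type~\ref{item:rank3-line} (rank~$3$ along a line), or, by the remark following Theorem~\ref{thm:symmsing} together with Proposition~\ref{prop:general-accidental-lines} and Definition~\ref{def:general-accidental-lines}, of type~F. Since complex conjugation is an automorphism of $V(f_A)$ as a real variety, the singular locus is defined over $\R$; a rank-$3$ line that is not real would come with its conjugate, giving \emph{two} lines, and a real rank-$3$ line is excluded for spectrahedral symmetroids by the remark after Theorem~\ref{thm:symmsing} (singular \point{3}s occur in conjugate pairs, so one real line would have to be accompanied by another). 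Hence type~\ref{item:rank3-line} with a single real line cannot occur on a spectrahedral symmetroid, which leaves exactly the three possibilities in the first sentence of the lemma: rank~$2$ along a real line, rank~$2$ along a real conic, or type~F (two conjugate rank-$3$ lines meeting in a \point{2}). The conic in type~\ref{item:conic} is defined over $\R$ because the singular locus is, so it is a \emph{real} conic; likewise the line in type~\ref{item:line}.

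For part~(1), I would argue as follows. By Theorem~\ref{thm:symmsing}\ref{item:line}, a symmetroid with rank~$2$ along a line $L$ has web $Q_A(x)$ with four coplanar basepoints; since the spectrahedron is nonempty these are two complex conjugate pairs $p_1,\overline p_1,p_2,\overline p_2$, coplanar, none real. The points of $L$ correspond to the quadrics of rank~$\le 2$ in the web that are singular precisely along $L$; concretely, $L$ is the line in $\C\P^3$ parametrizing a pencil of planes, and a point of $L$ at which $A$ has rank~$2$ gives a real quadric $Q = M\cup N$ of the web. By Lemma~\ref{lem:line-definite} (with the four coplanar basepoints), such a rank-$2$ quadric is semidefinite only when $M,N$ are complex conjugate, i.e.\ when the corresponding real point of $S$ lies where $Q_A$ has no real points; but the definite matrix $A(x_p)$ giving the spectrahedron has $Q_A(x_p)$ with no real points, and one checks — using that the boundary of the spectrahedron consists of points where $A(x)$ is semidefinite of lower rank — that every \point{2} \emph{on the boundary} corresponds to a semidefinite rank-$2$ quadric, while every \point{2} on $L$ corresponds, via the coplanar configuration and Lemma~\ref{lem:quadric-of-quadrics}, to an \emph{indefinite} quadric (the planar web $W$ whose real quadrics are indefinite). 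Therefore no point of $L$ can lie on the boundary, which is exactly the assertion that $L$ is disjoint from the spectrahedron.

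For part~(2), the conic case, I would run the analogous analysis but now with four \emph{non-coplanar} basepoints $p_1,\overline p_1,p_2,\overline p_2$ (by Theorem~\ref{thm:symmsing}\ref{item:conic}, linearly independent), so Lemma~\ref{lem:quadric-of-quadrics} gives the three quadratic surfaces $Q_i,Q_{s1},Q_{s2}$; by Remark~\ref{rmk:not-relevant} the four plane-components $H_{\bullet}$ are irrelevant here. The conic $C$ of \point{2}s is cut out on $S$ by a conic section in one of $Q_i,Q_{s1},Q_{s2}$. If $C$ lies on $Q_i$, its real points correspond to indefinite rank-$2$ quadrics and hence cannot be on the boundary but, having real points, $C$ meets $\R\P^3$; I need to allow also that $C$ may have a real point that is nonetheless off the spectrahedron, and the case $C$ on $Q_{s1}$ or $Q_{s2}$ where the real rank-$2$ quadrics are semidefinite, so a real point of $C$ \emph{can} sit on the boundary. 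Finally $C$ may be a real conic with no real points at all (the "cyclide"-type case, $Q_A(x)$ meeting $Q_{s1}$ or $Q_{s2}$ in a conic defined over $\R$ but with empty real locus). Enumerating these subcases against semidefinite-versus-indefinite via Lemma~\ref{lem:line-definite} yields precisely the three listed alternatives.

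The main obstacle I expect is the bookkeeping in part~(2): correctly matching each of $Q_i$, $Q_{s1}$, $Q_{s2}$ to whether a real point of the conic $C$ lying on it is forced to be indefinite or semidefinite, and then checking that a semidefinite \point{2} on $C$ actually lies on the topological boundary of the spectrahedron rather than merely on the Zariski closure $V(f_A)$ — this requires a local argument that near such a point the set of $x$ with $A(x)\succeq 0$ is full-dimensional on one side. The line case in part~(1) is easier because the coplanar-basepoint configuration of Lemma~\ref{lem:quadric-of-quadrics} leaves only the indefinite web $W$ available along $L$, so disjointness from the spectrahedron is immediate once that identification is made.
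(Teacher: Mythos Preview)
Your overall strategy matches the paper's, and parts (1) and (2) are essentially what the paper does: for the line you invoke the coplanar-basepoint case of \cref{lem:quadric-of-quadrics} to see that the pencil of \quadric{2}s along the line sits in the indefinite web $W$, and for the conic you note that the three a priori alternatives are exhaustive. (The paper is in fact briefer than you on part~(2): it simply records that the conic is real and that the listed alternatives are the obvious trichotomy; the finer analysis via $Q_i$, $Q_{s1}$, $Q_{s2}$ is deferred to the proof of \cref{thm:main}.)

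There is, however, a genuine gap in your treatment of the rank-$3$ case. From the remark after \cref{thm:symmsing} you correctly get that a spectrahedral symmetroid singular of rank $3$ along a line is singular along two complex conjugate lines. But nothing you cite forces these two lines to \emph{intersect}: complex conjugate lines in $\C\P^3$ can perfectly well be skew, and neither \cref{prop:general-accidental-lines} nor \cref{def:general-accidental-lines} excludes this, since both take the intersection as a hypothesis. The paper closes this gap with an argument you are missing: if the two lines are skew, then $V_\C(f_A)$ is a scroll whose lines sweep out a curve of bidegree $(2,2)$ on a quadric in the Grassmannian, hence $V_\C(f_A)$ is birational to an elliptic scroll and therefore irrational, contradicting the hypothesis. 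Only after this does one conclude that the lines meet (necessarily in a real point), landing in type~F. Your plan should incorporate this step; without it the first sentence of the lemma is not established.

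A smaller point: your justification that a \emph{real} rank-$3$ line cannot occur is phrased oddly (``one real line would have to be accompanied by another''). The clean argument, which is what underlies the remark you cite, is that a real singular \point{3} would, via \cref{lem:accidental-node}, produce a real basepoint of $Q_A(x)$, impossible when the spectrahedron is nonempty; hence no real point on such a line exists, so the line itself cannot be real.
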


\begin{proof}
    If the complex symmetroid $V_\C(f_A)$ is singular along a curve,
    then, by \cref{thm:symmsing},
    this curve contains a line or a smooth conic section.
    Furthermore, when $V_\C(f_A)$ is singular along a line,
    the matrix $A(x)$ may have rank $2$ or $3$ along the line. 

    In the first case,
    when $A(x)$ has rank $2$ along the line,
    the quadrics $Q_A(x)$ have four coplanar basepoints
    and the line is real.
    By \cref{lem:quadric-of-quadrics},
    the matrix $A(x)$ is indefinite along the line,
    so on the real spectrahedral symmetroid $V(f_A)$,
    the singular line must be disjoint from the spectrahedron.

    In the second case,
    when $A(x)$ has rank $3$ along the singular line,
    the web of quadrics $Q_A(x)$ contains a pencil $L\subset Q_A(x)$ of quadrics
    that are all singular at a basepoint cf.\ \cite[Proof of Proposition~3.5]{He}.
    Since this basepoint cannot be real,
    the complex conjugate is also a basepoint.
    But then,
    the complex conjugate pencil $\overline {L}\subset Q_A(x)$ must be distinct from $L$,
    and $V_\C(f_A)$ must be singular of rank $3$ along two complex conjugate lines.
    If these lines do not intersect,
    the symmetroid $V_\C(f_A)$ is a scroll of lines.
    The lines of this scroll form a curve of bidegree $(2,2)$
    on a quadratic surface in the Grassmannian of lines in $ \C\PP^3_y$,
    so the scroll is birational to an elliptic scroll, i.e., irrational.
    Therefore, the symmetroid $V_\C(f_A)$ that is singular,
    but of rank $3$ along two lines,
    is rational only if the two lines intersect.
    When the lines are complex conjugates,
    they of course intersect in a real point.
    Thus the real symmetroid $V(f_A)$ is singular at this point.

    If $V(f_A)$ is a rational spectrahedral symmetroid singular along a smooth conic section,
    then, by \cref{thm:symmsing},
    $A(x)$ must have rank $2$ along this curve
    and the web of quadrics $Q_A(x)$ has two pairs of complex conjugate basepoints
    that are linearly independent.
    Clearly the conic section is real and the \emph{a priori} listed possibilities follow.
\end{proof}

\noindent
We separate the proof of nonexistence of some cases
from the proof of \cref{thm:main}.
First we note that symmetroids of type~\ref{main-item:cyclide}
belong to a well-known class of real surfaces.
A real quartic surface singular along a conic section
with no real points in the plane at infinity,
is known as a \emph{cyclide} \cite[Chapter~V]{Jes}.

Not all cyclides are symmetroids.
By \cref{thm:symmsing}.\ref{item:conic},
a symmetroid singular along a conic has four additional nodes.
The cyclide $V\big(\big(x_0^2 + x_1^2 + x_2^2\big)^2 - x_3^4\big)$
has no singularities outside of the conic~$V\big(x_3, x_0^2 + x_1^2 + x_2^2\big)$,
hence it is not a symmetroid.
For cyclides with four additional nodes,
we have the following result:

\begin{proposition}[{\cite[Article~68]{Jes}}]
    If a cyclide has four additional nodes,
    then at most two of the isolated nodes are real.
\end{proposition}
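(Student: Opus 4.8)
The plan is to reduce the statement to a reality question about a pencil of quadrics in $\PP^4$, and to answer it by showing that the hypothesis on the singular conic fixes the real type of that pencil.

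First I would put the cyclide into its classical normal form. If $\Phi$ is an irreducible quartic double along a conic $C = V(\ell, q)$, with $\ell$ a linear form cutting out the plane spanned by $C$ and $q$ a quadric restricting to $C$ on that plane, then $f_\Phi \in (\ell, q)^2$, so $f_\Phi = c\, q^2 + b\, \ell q + a\, \ell^2$ with $b$ linear and $a$ quadratic; here $c \neq 0$, since otherwise $\ell \mid f_\Phi$, and after completing the square and rescaling one obtains $f_\Phi = q^2 - \ell^2 a$. Adjoining a variable $t$, the surface $\Phi$ is the image of the complete intersection of quadrics $X = V(t^2 - a) \cap V(q - \ell t) \subset \PP^4$ under the projection $[x_0 : \dots : x_3 : t] \mapsto [x_0 : \dots : x_3]$ away from $[0 : \dots : 0 : 1] \notin X$; this projection is finite and birational, with fibre the single point $t = q(x)/\ell(x)$ over any $x$ with $\ell(x) \neq 0$. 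As an ordinary node is a normal surface singularity, $X$ is the normalisation of $\Phi$, and the four isolated nodes of $\Phi$ are carried to four nodes of $X$. For a real cyclide the forms $\ell, q, a$ may be chosen real, so $X$, its pencil $\mathcal{P} = \langle V(t^2 - a), V(q - \ell t)\rangle$ and the node set are defined over $\R$; in particular complex conjugation permutes the four nodes, their number is even, and it remains only to rule out that all four are real.

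Next I would locate the nodes inside the pencil. A surface cut out by two quadrics in $\PP^4$ is singular only at base points lying on the singular locus of some member of its pencil: a corank-$1$ member can contribute only its vertex, while a corank-$2$ member, singular along a line $\lambda$, contributes the pair $\lambda \cap X$. Since the discriminant of $\mathcal{P}$ is a binary quintic, four isolated nodes force $\mathcal{P}$ to have exactly two corank-$2$ members $R_1, R_2$, each a double root of the discriminant, together with one further simple root; the four nodes are then the two conjugation-stable pairs $\lambda_1 \cap X$ and $\lambda_2 \cap X$. Such a pair consists of two complex conjugate points precisely when the binary quadratic form cut out on $\lambda_k$ by $\mathcal{P}$ is definite, and of two distinct real points precisely when it is indefinite. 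Hence all four nodes are real if and only if both $R_1$ and $R_2$ are of this latter, indefinite, type.

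The decisive step is to show that this is impossible when $C$ has no real points. The requirement that $C$ — equivalently the ternary form $q|_{V(\ell)}$ — be definite constrains the real structure of $\mathcal{P}$: in the classical pentaspherical picture it says exactly that $\Phi$ comes from a Lorentzian pencil, one governed by a quadratic form of a single negative inertia index, so that a degenerate member can yield a real node-pair only by absorbing that unique negative sign into its rank drop. Consequently at most one of $R_1, R_2$ can be of indefinite type, at most one of the two node-pairs is real, and at most two of the four nodes are real. Everything hinges on this last compatibility check — making explicit the dictionary between the normal form $f_\Phi = q^2 - \ell^2 a$, with $q|_{V(\ell)}$ a sum of squares, and the real pencil $\mathcal{P}$, and verifying that definiteness of $C$ precludes two hyperbolic degenerate blocks — and this is where I expect the real work (and the classical content of \cite[Article~68]{Jes}) to lie. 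A more pedestrian alternative that sidesteps the normal-form bookkeeping is to solve $f_\Phi = \nabla f_\Phi = 0$ for the four nodes as explicit functions of the coefficients of $a$, with $q|_{V(\ell)} = x_1^2 + x_2^2 + x_3^2$ fixed, and to check directly that no real $a$ renders all four solutions real; this is longer but self-contained. Steps one and two are routine.
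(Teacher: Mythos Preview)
The paper does not prove this proposition; it is quoted as a classical fact from Jessop's treatise and only its corollary is used. So there is no paper-proof to compare against, and your outline via the pencil of quadrics in $\P^4$ is in fact the classical pentaspherical argument.

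Your reduction to the pencil $\mathcal{P}$ and the identification of the four nodes with the two pairs $\lambda_k \cap X$ coming from two corank-$2$ members (Segre type $[(11)(11)1]$) are correct. The gap is exactly where you yourself flag it: you assert that definiteness of $C$ makes the pencil ``Lorentzian'' and that this forbids two indefinite corank-$2$ blocks, but you verify neither claim. Both are short, and you stopped just before the finish. With $\ell = x_0$ and $q|_{x_0=0} = x_1^2 + x_2^2 + x_3^2$, one has $q - x_0 t = x_1^2 + x_2^2 + x_3^2 + x_0 u$ for a new linear coordinate $u$, visibly of signature $(4,1)$; so $\mathcal{P}$ contains a nondegenerate Lorentzian member. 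Now the two double roots of the discriminant are either complex conjugate---in which case the four nodes split into two conjugate pairs and none is real---or both real. In the latter case the pencil diagonalises over $\R$: there are signs $\epsilon_1,\dots,\epsilon_5 \in \{\pm 1\}$ and reals $\alpha,\beta,\gamma$ with one member $\sum_i \epsilon_i y_i^2$ and another $\epsilon_1\alpha y_1^2 + \epsilon_2\alpha y_2^2 + \epsilon_3\beta y_3^2 + \epsilon_4\beta y_4^2 + \epsilon_5\gamma y_5^2$. The node-pair at $\alpha$ is real iff $\epsilon_1 \neq \epsilon_2$, and similarly at $\beta$. If both pairs were real, every nondegenerate member $\sum_i \epsilon_i(\lambda_i - \omega)y_i^2$ would carry at least two positive and two negative diagonal entries, hence have signature $(3,2)$ or $(2,3)$, contradicting the $(4,1)$ member. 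That is the entire ``real work'' you deferred; the pedestrian alternative you mention is unnecessary.
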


\begin{corollary}
    A general, real, quartic symmetroid singular along a smooth conic section
    with no real points has either two or no real nodes.
\end{corollary}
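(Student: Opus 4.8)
The plan is to read this off from the preceding proposition together with \cref{thm:symmsing}.\ref{item:conic}. First I would invoke \cref{thm:symmsing}.\ref{item:conic}: since $V(f_A)$ is a \emph{general} symmetroid that is singular along a smooth conic section, it has exactly four additional nodes, and by the remark following \cref{thm:symmsing} these are isolated \point{2}s.

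Next I would check that $V(f_A)$ is a cyclide in the sense of \cite[Chapter~V]{Jes}. Since $V_\C(f_A)$ is defined over $\R$, complex conjugation permutes the one-dimensional components of $\Sing V_\C(f_A)$; as the smooth conic is the only such component, conjugation fixes it, and hence fixes the plane it spans. That plane is therefore real, and a real projective change of coordinates carries it to the plane at infinity. Because the conic has no real points, $V(f_A)$ is thereby exhibited as a real quartic surface singular along a conic section with no real points in the plane at infinity --- a cyclide --- with four additional nodes. The preceding proposition then yields that at most two of these isolated nodes are real.

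Finally I would use a parity argument. Complex conjugation fixes $V_\C(f_A)$ and the singular conic, hence permutes the four remaining (isolated) singular points, so the non-real ones among them come in conjugate pairs; consequently the number of real isolated nodes is even. Together with the bound ``at most two'' from the previous step, this number is $0$ or $2$, which is the assertion.

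I do not expect a real obstacle. The only point that needs a touch of care is the reduction to the cyclide case: one must confirm that the hypothesis ``singular along a smooth conic with no real points'' genuinely places $V(f_A)$ among the cyclides after a real change of coordinates, so that \cite[Article~68]{Jes} applies; once that is in place, the rest is bookkeeping with \cref{thm:symmsing} and the parity of a conjugation-invariant set of four points.
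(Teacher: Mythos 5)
Your proposal is correct and follows the argument the paper leaves implicit: the corollary in the paper is stated without proof, and the intended route is exactly the one you describe---invoke \cref{thm:symmsing}.\ref{item:conic} for the count of four isolated nodes, observe that a real quartic singular along a real-point-free smooth conic is a cyclide after a real coordinate change, apply the preceding proposition to cap the number of real isolated nodes at two, and finish with the parity argument from conjugation invariance. The only mild caveat, which you already flag, is the reduction to the cyclide normal form (real plane sent to infinity); your conjugation argument for the reality of the plane handles it cleanly.
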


\noindent
In a paper by Chandru, Dutta and Hoffmann,
the authors summarize classical works by Cayley \cite{Ca2} and Maxwell \cite{Max}.
This is used to produce a classification of the various forms of the cyclides \cite[Section~6]{CDH}.
Apart from the degenerated cases of a cone or a cylinder,
the cyclides are divided into three forms,
\emph{horned cyclides}, \emph{ring cyclides} and \emph{spindle cyclides}.
Of these,
only the ring cyclides have no real nodes.
The ring cyclides resemble squashed tori and do not bound a convex region.
Hence they do not occur as spectrahedral symmetroids.
For horned cyclides and spindle cyclides with precisely two real nodes,
the nodes connect two components of the real cyclide.
If these occur as spectrahedral symmetroids,
one of the components is the boundary of the spectrahedron.
We conclude:

\begin{proposition}
    \label{prop:cyclide}
    Let $S \coloneqq V(f_A)$ be a real quartic symmetroid with a nonempty spectrahedron
    that is singular along a real conic section with no real points.
    Then $S$ has two real nodes,
    both on the boundary of the spectrahedron.
\end{proposition}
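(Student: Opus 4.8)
Let me sketch how I'd prove this. We're given a real quartic symmetroid $S = V(f_A)$ with a nonempty spectrahedron, singular along a real conic $C$ with no real points. By Theorem \ref{thm:symmsing}.\ref{item:conic}, $S$ is a cyclide (in the sense defined just above: a real quartic singular along a conic without real points at infinity — here the conic itself has no real points) and carries exactly four additional nodes, counted on the complex symmetroid. The plan is to combine the classical classification of cyclides with the spectrahedral constraint.

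The argument proceeds in three steps. First, I'd invoke the Proposition of Jessop (Article 68) together with its Corollary: since $S$ has four additional nodes and is singular along a conic with no real points, at most two of the four isolated nodes are real, and a general such symmetroid has either two or no real nodes. Second, I'd appeal to the Cayley–Maxwell classification as summarized in \cite[Section 6]{CDH}: a cyclide is (up to degenerate cones/cylinders) a horned, ring, or spindle cyclide. Ring cyclides have no real nodes and, resembling squashed tori, bound no convex region — hence cannot be spectrahedral, since the spectrahedron is a nonempty convex body whose boundary lies on $S$. This eliminates the "no real nodes" case. Third, for horned and spindle cyclides with exactly two real nodes, the two real nodes join two distinct components of the real locus; when such a surface is spectrahedral, one of these two components must be the boundary of the spectrahedron, and the two real nodes lie on that component. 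Hence $S$ has exactly two real nodes, both on the boundary of the spectrahedron, which is the claim.

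The main obstacle, and the place where care is needed, is the interface between the "general" statement in the Corollary and the specific spectrahedral symmetroid $S$: the Corollary asserts that a general symmetroid singular along a real conic without real points has two or no real nodes, but we must ensure $S$ itself cannot have fewer than two real nodes via some special degeneration, and cannot have complex conjugate pairs of \point{3}s on the singular conic inflating the count in a way that breaks the dichotomy. I would handle this by noting that the spectrahedral hypothesis already forces the relevant convexity, so $S$ lies among the horned or spindle forms (not the ring form, and not a cone or cylinder, which are not symmetroids by the node count), and for those forms the two real nodes are exactly the ones connecting the real components — precisely the picture needed. The rest is bookkeeping: that the boundary of the spectrahedron, being a convex hypersurface, is one of the real components of $S$, and that the two real nodes necessarily sit on it because they are the only points where $S$ fails to be locally a manifold separating that component from the rest.
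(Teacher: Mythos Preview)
Your proposal is correct and follows essentially the same approach as the paper: invoke Jessop's bound (at most two real nodes) and its corollary, then use the Cayley--Maxwell/CDH classification into horned, ring, and spindle cyclides to rule out the ring case by convexity, and finally observe that in the horned and spindle cases the two real nodes connect the two real components, one of which is the spectrahedral boundary. The only addition in your write-up is the explicit discussion of the ``general versus specific'' gap, which the paper leaves implicit.
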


We now disprove the existence of all spectrahedral symmetroids
of type~\ref{main-item:conic} with $(a, b) = (0, 0)$.

\begin{proposition}[\cite{Ste20}]
    \label{prop:conic-real-nodes}
    Let $S \coloneqq V(f_A) \subset \R\P^3$
    be a spectrahedral symmetroid,
    and suppose that the complex symmetroid~$V_\C(f_A) \subset \C\P^3$
    is of type~\ref{item:conic}.
    Then $S$ has a real, isolated node.
\end{proposition}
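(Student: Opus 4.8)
The plan is to analyze the web $Q_A(x)$ of a type~\ref{item:conic} symmetroid, whose base locus consists of two pairs of complex conjugate points $p_1,\overline{p}_1,p_2,\overline{p}_2$ that are linearly independent (by \cref{thm:symmsing}.\ref{item:conic} and the fact that the spectrahedron is nonempty, forcing no real basepoints). By \cref{rmk:not-relevant}, the isolated nodes of $V_\C(f_A)$ correspond to \quadric{2}s lying in exactly one of the three quadratic surfaces $Q_i$, $Q_{s1}$, $Q_{s2}$ of \cref{lem:quadric-of-quadrics}, and the singular conic itself corresponds to the intersection $C$ of the $3$-space $Q_A(x)$ with one of these three surfaces. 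So the four additional nodes arise as the points where $Q_A(x)$ meets the \emph{other two} of the surfaces $Q_i,Q_{s1},Q_{s2}$: a $3$-space meets a quadric surface in $\C\P^5$ (a threefold of degree, say, some fixed number $d$) in a finite scheme, and counting degrees one checks that $Q_A(x)$ meets each of the two remaining surfaces in two points, accounting for the four nodes.

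The key point is then a parity/reality argument applied to these two points on each remaining surface. First I would establish that the surfaces $Q_i$, $Q_{s1}$, $Q_{s2}$ are each defined over $\R$ and that complex conjugation permutes the three of them: conjugation fixes the unordered set $\{p_1,\overline p_1\}$ and $\{p_2,\overline p_2\}$ but swaps $p_1\leftrightarrow\overline p_1$ and $p_2\leftrightarrow\overline p_2$, which by the explicit line descriptions in the proof of \cref{lem:quadric-of-quadrics} (the lines $\linspan{p_1,p_2},\linspan{\overline p_1,\overline p_2}$ versus $\linspan{p_1,\overline p_2},\linspan{\overline p_1,p_2}$) swaps $Q_{s1}\leftrightarrow Q_{s2}$ and fixes $Q_i$. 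Since $Q_A(x)$ is a real $3$-space, the node-scheme it cuts on each of $Q_i,Q_{s1},Q_{s2}$ is conjugation-invariant as a whole, and conjugation identifies the scheme on $Q_{s1}$ with that on $Q_{s2}$. Now split into cases according to which of the three surfaces contains the singular conic $C$:

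\begin{itemize}
    \item If $C \subset Q_i$: then the four nodes come as a conjugation-invariant scheme of length~$2$ on $Q_{s1}$ and its conjugate of length~$2$ on $Q_{s2}$. The length-$2$ scheme on $Q_{s1}$ need not itself be real-invariant, but conjugation sends it to the scheme on $Q_{s2}$, so the two nodes on $Q_{s1}$ are the conjugates of the two nodes on $Q_{s2}$. This would give four nodes forming two conjugate pairs and seemingly no real node — so here I expect the real argument must instead come from \emph{semidefiniteness}: by \cref{lem:quadric-of-quadrics} the real quadrics in $Q_{s1},Q_{s2}$ are semidefinite, those in $Q_i$ indefinite, and one invokes that a spectrahedral web cannot avoid a real semidefinite \quadric{2} — this is the delicate step and is presumably where Stevens's argument enters.
    \item If $C\subset Q_{s1}$ (the case $C\subset Q_{s2}$ is conjugate, hence vacuous by reality of the configuration): then the four nodes split as a length-$2$ scheme on $Q_i$ and a length-$2$ scheme on $Q_{s2}$. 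The scheme on $Q_i$ is conjugation-invariant \emph{by itself} (since conjugation fixes both $Q_i$ and $Q_A(x)$), so either both of its points are real or they form a conjugate pair; and the scheme on $Q_{s2}$ is conjugate to a scheme on $Q_{s1}$ which is not a component — contradiction with the structure unless that length-$2$ scheme on $Q_{s1}$ is also present, forcing $C$ to lie in the intersection $Q_{s1}\cap Q_{s2}$ and hence a degeneration. Tracking this carefully shows $C$ cannot lie in a conjugation-nontrivial surface, so in fact $C\subset Q_i$ always, and we are back in the first case.
\end{itemize}

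The hard part will be the first bullet: showing that when the singular conic sits on $Q_i$, the web $Q_A(x)$ cannot have all four additional nodes non-real. I expect this to require the spectrahedral hypothesis in an essential way — the definite matrix $A(x_p)$ gives a real point of the $3$-space $Q_A(x)$ at which the quadric is empty, and a topological or degree-theoretic argument (in the spirit of the cyclide analysis preceding this proposition, or Stevens's argument from \cite{Ste20}) must force the real locus of one of the semidefinite components $Q_{s1},Q_{s2}$ to meet $Q_A(x)$, producing a real \quadric{2} in the web and hence a real node on $S$. Everything else — the degree count giving four nodes, the conjugation action on $Q_i,Q_{s1},Q_{s2}$, and the reduction to the case $C\subset Q_i$ — is routine bookkeeping with \cref{lem:quadric-of-quadrics,rmk:not-relevant}.
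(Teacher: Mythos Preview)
Your proposal contains a genuine error that unravels the whole strategy. You claim that complex conjugation swaps $Q_{s1}\leftrightarrow Q_{s2}$. It does not: conjugation sends the pair of lines $\{\linspan{p_1,p_2},\linspan{\overline p_1,\overline p_2}\}$ to itself (swapping the two lines), so $Q_{s1}$ is conjugation-stable; likewise $Q_{s2}$. All three surfaces $Q_i,Q_{s1},Q_{s2}$ are defined over $\R$, as the paper's explicit equations confirm. Consequently your second bullet, which argues that $C\subset Q_{s1}$ leads to a contradiction and hence ``$C\subset Q_i$ always'', is simply false: the conic can perfectly well lie in $Q_{s1}$ or $Q_{s2}$ (this is exactly the paper's case~\ref{main-item:conic-on-boundary}), and these cases must be handled, not ruled out. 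Your first bullet, meanwhile, does not actually supply an argument: you identify the ``hard part'' and then defer to an unspecified topological or degree-theoretic step from \cite{Ste20}.

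The paper's proof takes a completely different, computational route. After normalizing coordinates so that $p_1=[1:i:0:0]$, $p_2=[0:0:1:i]$, it brings $A(x)$ into one of two explicit four-parameter normal forms, one for $C\subset Q_{s1}$ and one for $C\subset Q_i$. In each form the isolated nodes are cut out explicitly and their reality is governed by two discriminants. In the $C\subset Q_i$ case the key observation is the elementary identity $D_{s1}+D_{s2}=2(a_1^2+a_2^2+a_3^2)\ge 0$, so both discriminants cannot be negative and at least one pair of nodes is real. In the $C\subset Q_{s1}$ case, if both discriminants $D_{s2}$ and $D_i$ are negative then Sylvester's criterion forces the conic $Q_A(x)\cap Q_{s1}$ itself to have no real points, contradicting the assumption that $C$ meets the real locus. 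The cyclide case is handled separately by \cref{prop:cyclide}. There is no Galois or parity argument; the spectrahedral hypothesis enters only through the absence of real basepoints (already used to set up the conjugate pairs) and, implicitly, through the reduction to a Zariski-open set of symmetroids where the normal forms are valid.
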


\begin{proof}
    The symmetroids with a given pair~$(a, b)$
    form a full-dimensional, Zariski open set
    in the families of symmetroids of types~\ref{main-item:conic-on-boundary},
    \ref{main-item:conic-disjoint} or \ref{main-item:cyclide}.
    Hence it suffices to show the nonexistence of symmetroids with
    $(a, b) = (0, 0)$ on a Zariski open set in each of the families.

    By \cref{thm:symmsing}.\ref{item:conic},
    $Q_A(x)$ has four linearly independent basepoints.
    Since $S$ is spectrahedral,
    the basepoints appear in complex conjugate pairs,
    $p_1$, $\overline{p}_1$ and $p_2$, $\overline{p}_2$.
    After a change of coordinates,
    we may assume that
    $p_1 \coloneqq [1 : i : 0 : 0]$ and $p_2 \coloneqq [0 : 0 : 1 : i]$.
    Consider the space~$Q(x)$ of all quadrics with~$p_1$,
    $\overline{p}_1$, $p_2$, $\overline{p}_2$ as basepoints.
    The quadrics in $Q(x)$ have matrices on the form
    \begin{equation*}
        \label{eq:matrix-general-basepoints}
        M(x)
        \coloneqq
        \begin{bmatrix}
            x_{00} &   0    & x_{02} & x_{03} \\
              0    & x_{00} & x_{12} & x_{13} \\
            x_{02} & x_{12} & x_{22} &   0    \\
            x_{03} & x_{13} &   0    & x_{22}
        \end{bmatrix}
        \mkern-7mu.
    \end{equation*}
    The quadratic surfaces described in \cref{lem:quadric-of-quadrics} are
    \begin{equation*}
        \begin{aligned}
            Q_i    &= V(x_{00}, x_{22}, x_{02}x_{13} - x_{03}x_{12}), \\
            Q_{s1} &= V(x_{02} - x_{13}, x_{03} + x_{12}, x_{00}x_{22} - x_{12}^2 - x_{13}^2),
            \\
            Q_{s2} &= V\big(x_{02} + x_{13}, x_{03} - x_{12}, x_{00}x_{22} - x_{12}^2 - x_{13}^2\big).
        \end{aligned}
    \end{equation*}
    By \cref{rmk:not-relevant},
    we do not have to consider $H_{p_1}$, $H_{\overline{p}_1}$, $H_{p_2}$, $H_{\overline{p}_2}$.

    For symmetroids of type~\ref{main-item:conic-on-boundary},
    $Q_A(x)$ intersects either $Q_{s1}$ or $Q_{s2}$ in a conic section;
    say $Q_{s1}$.
    Then the hyperplane spanned by $Q_{s1}$ and $Q_A(x)$
    equals $V(\lambda(x_{02} - x_{13}) + \mu(x_{03} + x_{12}))$
    for some constants $\lambda$, $\mu$.
    After conjugating $M(x)$ with the matrix
    \begin{equation*}
        \begin{bmatrix}
            \lambda & -\mu               & 0 & 0
            \\
            \mu     & \phantom{-}\lambda & 0 & 0
            \\
            0       & \phantom{-}0       & 1 & 0
            \\
            0       & \phantom{-}0       & 0 & 1
        \end{bmatrix}
        \mkern-7mu,
    \end{equation*}
    we may assume that $\mu = 0$.
    Thus $x_{02} = x_{13}$
    in the hyperplane $\linspan{Q_{s1}, Q_A(x)}$.
    If $Q_A(x)$ is sufficiently general,
    we may after a projective linear transformation
    assume that $A(x)$ is on the form
    \begin{equation}
        \label{eq:C1}
        \begin{bmatrix}
            x_0                               & 0   & x_1 &
            a_0x_0 + a_1x_1 + a_2x_2 + a_3x_3
            \\
            0                                 & x_0 & x_2 &
            x_1
            \\
            x_1                               & x_2 & x_3 &
            0
            \\
            a_0x_0 + a_1x_1 + a_2x_2 + a_3x_3 & x_1 & 0   &
            x_3
        \end{bmatrix}
        \mkern-7mu,
    \end{equation}
    for $a_0, a_1, a_2, a_3 \in \R$.
    The \locus{2} of $V_\C(f_A)$ is then
    \begin{align*}
        Q_A(x) \cap Q_{s1}
        &=
        V
        \big(
            a_0x_0 + a_1x_1 + (a_2 + 1)x_2 + a_3x_3,
            x_1^2 + x_2^2 - x_0x_3
        \big),
        \\
        Q_A(x) \cap Q_{s2}
        &=
        V
        \big(
            x_1,
            a_0x_0 + (a_2 - 1)x_2 + a_3x_3,
            x_2^2 - x_0x_3
       \big),
        \\
        Q_A(x) \cap Q_i
        &=
        V
        \big(
            x_0,
            x_3,
            x_1^2 - a_1x_1x_2 - a_2x_2^2
        \big).
    \end{align*}
    Assume for contradiction that $S$ has no real, isolated nodes.
    Then $Q_A(x) \cap Q_{s2}$ and $Q_A(x) \cap Q_i$ are not real.
    For $a_0 = 1$,
    this means that the discriminants
    \begin{align}
        \label{eq:C1-Ds2}
        D_{s2} &\coloneqq (a_2 - 1)^2 - 4a_3
        \\
        \label{eq:C1-Di}
        D_i &\coloneqq a_1^2 + 4a_2
    \end{align}
    are negative.
    Moreover,
    for $a_0 = 1$,
    Sylvester's criterion implies that the conic section~$Q_A(x) \cap Q_{s1}$
    is positive definite if and only if $D_{s2} + D_i < 0$.
    In other words,
    if $S$ has no real, isolated nodes,
    then $Q_A(x) \cap Q_{s1}$
    has no real points,
    so $S$ is not of type~\ref{main-item:conic-on-boundary}.

    \medskip
    \noindent
    For symmetroids of type~\ref{main-item:conic-disjoint},
    $Q_A(x)$ intersects $Q_i$ in a conic section.
    The hyperplane spanned by $Q_i$ and $Q_A(x)$
    equals $V(\lambda x_{00} + \mu x_{22})$
    for some constants $\lambda$, $\mu$.
    After rescaling $x_{22}$,
    we may assume that $x_{22} = \pm x_{00}$ in this hyperplane.
    If $x_{22} = -x_{00}$,
    then Sylvester's criterion implies that the hyperplane contains no definite matrix.
    Assume therefore that $x_{22} = x_{00}$.
    If $Q_A(x)$ is sufficiently general,
    we may after a projective linear transformation
    assume that $A(x)$
    is on the form
    \begin{equation}
        \label{eq:C2}
        \begin{bmatrix}
            x_0 &                                 0 &
            x_1 &                               x_2
            \\
              0 &                               x_0 &
            x_3 & a_0x_0 + a_1x_2 + a_2x_2 + a_3x_3
            \\
            x_1 &                               x_3 &
            x_0 &                                 0
            \\
            x_2 & a_0x_0 + a_1x_1 + a_2x_2 + a_3x_3 &
              0 &                               x_0
        \end{bmatrix}
    \end{equation}
    for $a_0, a_1, a_2, a_3 \in \R$.
    The isolated nodes of $V_\C(f_A)$ are then
    \begin{align*}
        Q_A(x) \cap Q_{s1}
        &=
        V
        \big(
            x_2 + x_3,
            a_0x_0 + (a_1 - 1)x_1 - (a_2 - a_3)x_3,
            x_0^2 - x_1^2 - x_3^2
        \big),
        \\
        Q_A(x) \cap Q_{s2}
        &=
        V
        \big(
            x_2 - x_3,
            a_0x_0 + (a_1 + 1)x_1 + (a_2 + a_3)x_3,
            x_0^2 - x_1^2 - x_3^2
        \big).
    \end{align*}
    For $a_0 = 1$,
    these are not real if the discriminants
    \begin{align}
        \label{eq:C2-Ds1}
        D_{s1}
        &\coloneqq
        a_1^2 + a_2^2 + a_3^2 - 2a_1 - 2a_2a_3
        \\
        \label{eq:C2-Ds2}
        D_{s2}
        &\coloneqq
        a_1^2 + a_2^2 + a_3^2 + 2a_1 + 2a_2a_3
    \end{align}
    are negative.
    But if $D_{s1} < 0$ and $D_{s2} < 0$,
    then $D_{s_1} + D_{s2} = 2(a_1^2 + a_2^2 + a_3^2) < 0$,
    which is impossible.
    Hence at least one of $Q_A(x) \cap Q_{s1}$ and $Q_A(x) \cap Q_{s2}$
    consists of real points.

    \medskip\noindent
    The case for symmetroids of type~\ref{main-item:cyclide}
    is covered by \cref{prop:cyclide}.
\end{proof}

\section{Proof of \texorpdfstring{\cref{thm:main}}{Theorem 1.7}}
\label{sec:proof}
\subsection{Type \texorpdfstring{\ref{main-item:triple}}{A}}

Triple points are \point{1}s,
hence the real ones are semidefinite
and therefore on the boundary of the spectrahedron.

By \cref{thm:symmsing}.\ref{item:triple},
a general complex symmetroid with a triple point has six nodes.
Since a spectrahedral symmetroid is a real surface,
the number $a$ of real nodes is even.
There are no further restraints on $b$,
the number of real semidefinite nodes,
as the examples in \cref{tab:A} show.

\subsection{Type \texorpdfstring{\ref{main-item:tacnode}}{B}}
\label{sec:proof-B}

By \cref{thm:symmsing}.\ref{item:tacnode},
the web $Q_A(x)$ has two basepoints.
Since $S$ is spectrahedral,
the basepoints are complex conjugates, $p$ and $\overline{p}$.
Let $Q(x)$ be the $7$-dimensional linear system of all quadratic surfaces with $p$ and $\overline{p}$ as basepoints.
The \locus{2} of $Q(x)$ consists of two fourfolds, $X_i$ and $X_s$.
The quadrics in $X_i$ are pairs of planes,
where one of the planes contains the line $\linspan{p, \overline{p}}$.
In $X_s$, the quadrics consist of two planes,
where the planes contain one basepoint each.
The set $\Sing(X_i) = \Sing(X_s)$ consists of pairs of planes that both contain $\linspan{p, \overline{p}}$.
The real quadrics in $X_i \setminus \Sing(X_i)$ are indefinite and the real quadrics in $X_s \setminus \Sing(X_s)$ are semidefinite.
The real quadrics in $\Sing(X_i) = \Sing(X_s)$ are either semidefinite or indefinite.

In the proof of \cite[Proposition~7.4]{He},
it is shown that the tacnode corresponds to a point in $\Sing(X_i) = \Sing(X_s)$,
and that $Q_A(x)$ intersects $X_i \setminus \Sing(X_i)$ in two points and $X_s \setminus \Sing(X_s)$ in four points.
The claim follows.

\subsection{Type \texorpdfstring{\ref{main-item:conic}}{C}}
\label{sec:proof-conic}

Let $C$ denote the conic section in the singular locus of $S$.
By \cref{thm:symmsing}.\ref{item:conic},
$S$ is the discriminant of a web $Q_A(x)$ of quadrics with four linearly independent basepoints.
In the notation of \cref{lem:quadric-of-quadrics},
$Q_A(x)$ intersects one of the surfaces $Q_i$, $Q_{s1}$ or $Q_{s2}$ in $C$
and the remaining two surfaces in two points each.
By \cref{rmk:not-relevant},
$S$ does not have any other \point{2}s.
Hence $a \le 4$.

Assume first that $C$ contains a real point.
If $C$ is disjoint from the spectrahedron, then $C \subset Q_i$.
The real quadrics in $Q_{s1}$ and $Q_{s2}$ are semidefinite, so $a = b$.
If $C$ lies on the boundary of the spectrahedron,
then $C$ is contained in either $Q_{s1}$ or $Q_{s2}$.
It follows that $a \le b + 2$,
since the real quadrics in $Q_i$ are indefinite.
We get $a \ge 2$ from \cref{prop:conic-real-nodes}.

The case where $C$ has no real points is dealt with in \cref{prop:cyclide}.

\subsection{Type \texorpdfstring{\ref{main-item:line}}{D}}

\cref{lem:nonisolated}
states that a line of \point{2}s is disjoint from the spectrahedron.

By \cref{thm:symmsing}.\ref{item:line},
$S$ is the discriminant of a web $Q_A(x)$ of quadrics with four coplanar basepoints.
Since $S$ has a nonempty spectrahedron, the basepoints occur in two complex conjugate pairs.
In the notation of \cref{lem:quadric-of-quadrics},
$Q_A(x)$ intersects $W$ in a line
and the surfaces $Q_{i}$, $Q_{s1}$, $Q_{s2}$ in two points each.
These are the only singularities on $S$, so we get that
\begin{equation*}
0 \le b \le a \le b + 2 \le 6,
\end{equation*}
and $a$, $b$ are even from \cref{lem:quadric-of-quadrics}.

\subsection{Type \texorpdfstring{\ref{main-item:two-lines}}{F}}

It follows from \cite[Remark~4.5]{Hel19}
that $S$ is the intersection of a $3$-space with $S'\subset \C\PP^5$,
where $S'$ is a quartic spectrahedral symmetroid that is singular of rank~$3$ along two complex conjugate $3$-spaces intersecting in a plane.
The claim is immediate from \cite[Proposition~4.6]{Hel19},
which says that $S'$ is singular along an additional quadratic surface whose real points lie on the spectrahedron.
\qed

\section{Deformation relations between rational symmetroids}
\label{sec:degenerations}

It is natural to ask whether the different types of rational symmetroids
listed in \cref{thm:main}
are deformations of each other.
That is,
does there exist a flat family of symmetroids
where one of the symmetroids is of one type,
while all the others are of a different type?
The following result addresses this:

\begin{proposition}
    Let $V(f_A) \subset \R\P^3$ be a quartic spectrahedral symmetroid
    and suppose that the complex symmetroid $V_\C(f_A) \subset \C\P^3$ is rational.
    If $V_\C(f_A)$ is of type \textup{\ref{item:tacnode}},
    then it can degenerate into a symmetroid of type~\textup{F}.
    This is the only instance of a symmetroid of type~\textup{\ref{item:triple}},
    \textup{\ref{item:tacnode}}, \textup{\ref{item:conic}}, \textup{\ref{item:line}}, or \hyperref[def:general-accidental-lines]{\textup{F}}
    that degenerates into one of the other types.
    Moreover,
    a real symmetroid $V(f_A)$
    does not degenerate from type~\ref{main-item:tacnode-on-boundary}
    to \ref{main-item:tacnode-disjoint};
    from \ref{main-item:conic-on-boundary} to
    \ref{main-item:conic-disjoint} or \ref{main-item:cyclide};
    or from \ref{main-item:two-lines-on-boundary} to \ref{main-item:two-lines-disjoint},
    or vice versa.
    In addition,
    a real symmetroid~$V(f_A)$ with $a$ real nodes, $b$ of which lie on the spectrahedron,
    does not degenerate into a symmetroid~$V(f_{A'})$ with $a'$ real nodes, $b'$ of which lie on the spectrahedron,
    if $(a, b) \neq (a', b')$.
\end{proposition}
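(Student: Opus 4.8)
The plan is to treat this in three steps. First, to realise the one degeneration that does occur---type~\ref{item:tacnode} into type~F---I would write down an explicit one-parameter family $A_t(x)$ of symmetric $(4\times 4)$-matrices of linear forms, starting from a normal form for type~\ref{item:tacnode} as in the proof of \cref{thm:main}: for $t\neq 0$ the web $Q_{A_t}(x)$ should have a conjugate pair of simple basepoints and meet the locus $\Sing(X_i)=\Sing(X_s)$ appearing in that proof in a single point, so that $V_\C(f_{A_t})$ has a tacnode and is of type~\ref{item:tacnode}; at $t=0$ the two basepoints should fatten to a base scheme of length~$4$ supported at two points, the tacnode should become the intersection point of two conjugate lines of rank~$3$, and four of the six nodes should merge onto those lines. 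One then checks flatness and that the special fibre is a general member of type~F via \cref{prop:general-accidental-lines} and \cref{def:general-accidental-lines}; such a family also appears among the examples of \cref{app:examples}.

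Second, for the non-existence of all other degenerations between the complex types, let $\{A_t\}$ be a flat family of quartic symmetroids whose general member has type $X$ and whose special member has type $Y$, with $X\neq Y$ both among \ref{item:triple}, \ref{item:tacnode}, \ref{item:conic}, \ref{item:line}, F. I would use that the following are semicontinuous (or pass to limits) under specialisation: the maximal multiplicity of a point of the surface; the dimensions of the singular locus and of the locus $R_2(A)\coloneqq\{x\in\P^3\mid\rank A(x)\le 2\}$; the degree of the purely one-dimensional part of those two loci; the length of the base scheme of the web; coplanarity of its basepoints; and the dimension of the family (from \cite{He,Hel19}), since an irreducible family cannot lie in the closure of a different irreducible family of no greater dimension. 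Then: type~\ref{item:triple} is the unique type with a triple point and has the largest dimension, so it is neither source nor target; the dimension of the singular locus forbids any degeneration of \ref{item:conic}, \ref{item:line} or~F into \ref{item:triple} or~\ref{item:tacnode}, and the dimension of $R_2(A)$ forbids \ref{item:conic} and \ref{item:line} into~F; the degree of the one-dimensional singular loci forbids degenerations between \ref{item:conic} and \ref{item:line} and of~F into \ref{item:conic} or~\ref{item:line}; the base-scheme length forbids \ref{item:conic} and \ref{item:line} into~\ref{item:tacnode} and \ref{item:tacnode} into~\ref{item:triple}; and coplanarity forbids \ref{item:line} into~\ref{item:conic}. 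The only surviving cases are the degenerations of type~\ref{item:tacnode} into \ref{item:conic} and into~\ref{item:line}, for which the dimension inequality points the wrong way; I expect these to be the main obstacle. For them I would argue directly that a general symmetroid of type~\ref{item:conic} (web with four linearly independent basepoints, rank~$2$ along a smooth conic) or type~\ref{item:line} (web with four coplanar basepoints, rank~$2$ along a line) does not lie in the closure of the type-\ref{item:tacnode} family---combining the dimension count with the observation that the limiting positions of the two basepoints and of the seven singular points of a type-\ref{item:tacnode} symmetroid are incompatible with the basepoint configuration and rank-$2$ locus that those types require.

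Third, the real refinements follow from the fact that a continuous family of real symmetric matrices of constant rank has constant signature on connected parameter sets. Along a flat family all of whose members are general of a fixed real type, $A_t$ has constant rank---hence constant signature---at the tacnode, at each isolated node, along the singular conic, and at the intersection point of the two lines; so whether any of these is semidefinite, i.e.\ lies on the boundary of the spectrahedron, is constant, and $a$ is constant because real nodes stay real until two collide. Therefore no degeneration from~\ref{main-item:tacnode-on-boundary} to~\ref{main-item:tacnode-disjoint}, from~\ref{main-item:conic-on-boundary} to~\ref{main-item:conic-disjoint} or~\ref{main-item:cyclide}, or from~\ref{main-item:two-lines-on-boundary} to~\ref{main-item:two-lines-disjoint}, or any of these reversed, can occur without the special member leaving the general stratum of its type, and $(a,b)$ is likewise unchanged within a type. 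Finally, in the single type-changing degeneration, type~\ref{item:tacnode} into~F, the two isolated nodes of the limit are limits of two of the six nodes of $A_t$ with their rank and hence signature unchanged, whereas any real isolated node not surviving to the limit must approach the unique real point of the conjugate pair of lines, namely the intersection point; this forces $a\le 2$ for a type-\ref{item:tacnode} symmetroid that degenerates to type~F and shows $(a,b)$ is preserved, completing the argument.
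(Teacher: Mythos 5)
Your high-level decomposition matches the paper's: realise the B\(\to\)F degeneration by an explicit family, rule out the other complex degenerations by semicontinuity of natural invariants plus a finer argument in the residual cases, and handle the real refinement by signature constancy at constant rank together with a collision argument for \(a\). The real part of your argument (constant rank \(\Rightarrow\) constant signature on a connected parameter set, two conjugate nodes can only become real by colliding) is exactly the paper's, and your extra remark that the B\(\to\)F degeneration can only send a tacnodal symmetroid with \(a\le2\) to one of type F is a correct and welcome consistency check that the paper leaves implicit.

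The one genuine gap is in the case you yourself flag as ``the main obstacle'': ruling out that a type-\ref{item:tacnode} symmetroid degenerates into type~\ref{item:conic} or type~\ref{item:line}. Your dimension count does not help here---a higher-dimensional family can perfectly well contain a lower-dimensional one in its closure, so the inequality does not ``point the wrong way'' in any usable sense---and the phrase ``limiting positions \ldots are incompatible'' is not an argument. The paper closes this with a specific geometric observation: at the tacnode of a type-\ref{item:tacnode} symmetroid the associated quadric \(Q_A(x_p)\) is singular along the line \(\linspan{p,\overline p}\) through the two basepoints, so in any degeneration the limiting web still contains a quadric singular along a line spanned by two of its basepoints. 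In the notation of \cref{lem:quadric-of-quadrics}, for four independent basepoints such a quadric lies in one of \(Q_i\cap Q_{s1}\), \(Q_i\cap Q_{s2}\), \(Q_{s1}\cap Q_{s2}\), and for four coplanar basepoints in one of \(W\cap Q_i\), \(W\cap Q_{s1}\), \(W\cap Q_{s2}\); either way the limit surface then has one fewer isolated \point{2} than a general symmetroid of type~\ref{item:conic} or~\ref{item:line}, so the limit cannot be of those types. You would need to supply this (or an equivalent) concrete argument; as written, your proposal does not rule these degenerations out.

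One smaller point: for step~1 the paper writes a single explicit family \(M(t)\) that visibly interpolates between the tacnodal normal form~\eqref{eq:tacnode} and the two-line normal form~\eqref{eq:matrix-two-lines}, so flatness and the description of the special fibre are immediate; your sketch describes the desired behaviour of the family but does not exhibit it, and that step should be made explicit rather than deferred to ``such a family also appears among the examples''.
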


\begin{proof}
    We start with the complex symmetroids.
    In \cite[Remark~9.2]{He},
    it is argued that if $S$ is a quartic surface
    singular along two intersecting lines, $L_1$ and $L_2$,
    with the implicit assumption that $L_1 \cap L_2$ is not a triple point,
    then $S$ satisfies the equation of a tacnodal surface.
    Hence a symmetroid of type~\ref{main-item:two-lines}
    can arise as a degeneration of surfaces
    of type \ref{item:tacnode} of \cref{thm:symmsing}.
    Consider the explicit deformation
    \begin{equation*}
        M(t)
        \coloneqq
        \begin{bmatrix}
            l_{00} & 0                           & l_{02}             &
            l_{03}
            \\
            0      & l_{00}                      & -l_{03} + tl_{12}  &
            l_{02} + tl_{13}
            \\
            l_{02} & -l_{03} + tl_{12}           & a_{22}x_0 + l_{22} &
            a_{23}x_0 + l_{23}
            \\
            l_{03} & \phantom{-}l_{02} + tl_{13} & a_{23}x_0 + l_{23} &
            a_{33}x_0 + l_{33}
        \end{bmatrix}
        \mkern-7mu,
    \end{equation*}
    where $l_{ij}$ are linear forms in $x_1$, $x_2$, $x_3$,
    and $a_{22}$, $a_{23}$, $a_{33}$, $t$ are scalars.
    For all $t$,
    the matrix $M(t)$ is on the form \eqref{eq:tacnode} of a tacnodal symmetroid.
    For $t = 0$,
    the matrix $M(0)$ takes the form \eqref{eq:matrix-two-lines}
    of a symmetroid singular along two lines.

    No other type of complex symmetroids is a degeneration of one of the other types.
    Most of the possible degenerations can be excluded by simple reasons,
    for instance a surface does not degenerate into one whose singular locus
    has lower degree or dimension,
    or into one with only singularities of lower multiplicity.
    We check the remaining cases:
    \begin{itemize}
        \item
        A symmetroid of type~\ref{item:triple}
        is not a degeneration of symmetroids of type~\ref{item:tacnode}.
        This is because a web of quadrics $Q_A(x)$ with basepoints
        does not degenerate into a web with no basepoints.
    
        \item
        Symmetroids of types~\ref{item:conic} or \ref{item:line}
        are not degenerations of symmetroids of type~\ref{item:tacnode}.
        Indeed,
        as noted in the proof of \cref{thm:main},
        the associated quadric at a tacnode
        is singular along the line spanned by two basepoints of $Q_A(x)$.
        In a degeneration of tacnodal symmetroids,
        one quadric in $Q_A(x)$ is singular along the line spanned by two basepoints.
        For symmetroids of types~\ref{item:conic} or \ref{item:line},
        $Q_A(x)$ has four basepoints.
        Assume that $Q_A(x)$ has four basepoints
        and recall the notation from \cref{lem:quadric-of-quadrics}.
        If the basepoints are independent
        and a quadric is singular along the line spanned by two of them,
        then it lies in one of the intersections
        $Q_i \cap Q_{s1}$, $Q_i \cap Q_{s2}$ or $Q_{s1} \cap Q_{s2}$.
        If the basepoints are coplanar
        and a quadric is singular along the line spanned by two of them,
        then it lies in one of the intersections
        $W \cap Q_i$, $W \cap Q_{s1}$ or $W \cap Q_{s2}$.
        In either case,
        this implies that the surface has one fewer isolated \point{2},
        than one of type~\ref{item:conic} or \ref{item:line}.

        \item
        A symmetroid of type~\ref{item:conic}
        is not a degeneration of symmetroids of type~\ref{item:line}.
        This is because a web of quadrics~$Q_A(x)$ with linearly dependent basepoints
        does not degenerate into one with independent basepoints.

        \item
        A symmetroid of type~\hyperref[def:general-accidental-lines]{\textup{F}}
        is not a degeneration 
        of symmetroids of types~\ref{item:conic} or \ref{item:line}.
        This is because a symmetroid with a curve in its \locus{2}
        does not degenerate into one without a curve in its \locus{2}.
    \end{itemize}
    For the real symmetroids,
    we note that a positive semidefinite \rankmatrix{2} has two positive eigenvalues,
    while an indefinite \rankmatrix{2} has eigenvalues with different signs.
    In a degeneration from one into the other,
    one of the eigenvalues changes to $0$,
    causing the rank to drop.
    This implies that a symmetroid with singularities on the spectrahedron
    does not degenerate into one with singularities outside of the spectrahedron,
    and vice versa.

    Finally,
    in a degeneration of a surface with two complex conjugate nodes
    into one with two real nodes,
    the imaginary part of the coordinates of the nodes changes to $0$,
    while the coordinates have the same real part for both nodes.
    Hence the two nodes coincide.
    Thus a symmetroid with $a$ real nodes
    does not degenerate into one with $a' \neq a$ real nodes.
\end{proof}

\chapter{Examples of existence}
\label[section]{app:examples}

This section lists matrices that define spectrahedral symmetroids
with the different values of $(a, b)$ described by \cref{thm:main}.
There is one missing instance,
namely \ref{main-item:line} with $(a, b) = (0, 0)$.
For brevity in the tables,
we write $(a_{00}, a_{01}, a_{02}, a_{03}, a_{11}, a_{12}, a_{13}, a_{22}, a_{23}, a_{33})$
for the symmetric matrix
\begin{equation*}
    \begin{bmatrix}
        a_{00} & a_{01} & a_{02} & a_{03}
        \\
        a_{01} & a_{11} & a_{12} & a_{13}
        \\
        a_{02} & a_{12} & a_{22} & a_{23}
        \\
        a_{03} & a_{13} & a_{23} & a_{33}
    \end{bmatrix}
    \mkern-7mu.
\end{equation*}
\cref{tab:A,tab:B,tab:F} provide one such $10$-tuple
for each of the matrices $A_2$ and $A_3$ in \eqref{eq:AxAx}.
The matrices $A_0$ and $A_1$ are fixed,
and therefore not listed.

\section{Spectrahedral symmetroids with a triple point}
\label[section]{app:triple}

Triple points correspond precisely to \point{1}s.
Thus \eqref{eq:AxAx} defines a spectrahedral symmetroid with a triple point
if $A_0$ is a positive definite matrix, $A_1$ is a \rankmatrix{1},
and $A_2$, $A_3$ are any symmetric matrices.
To find examples,
we let $A_0$ be the identity matrix~$I_4$,
\begin{equation*}
    A_1
    \coloneqq
    \begin{bmatrix}
        1 & 0 & 0 & 0 \\
        0 & 0 & 0 & 0 \\
        0 & 0 & 0 & 0 \\
        0 & 0 & 0 & 0
    \end{bmatrix}
    \mkern-7mu,
\end{equation*}
and used a pseudorandom generator to draw symmetric matrices $A_2$ and $A_3$ with integer entries between $-9$ and $9$.
Doing this,
we found all values of $(a, b)$ specified by \cref{thm:main}.\ref{main-item:triple}.
The results are listed in \cref{tab:A}.

\begin{longtable}{@{}
        m{0.08\textwidth}
        >{\(}m{0.42\textwidth}<{\)}
        >{\(}m{0.42\textwidth}<{\)}
        @{}}
    \caption{Examples of matrices defining spectrahedral symmetroids
        of type~\ref{main-item:triple}.}
    \label{tab:A}
    \\*
    \toprule
    $\boldsymbol{(a, b)}$
    &
    \boldsymbol{A_2}
    &
    \boldsymbol{A_3}
    \\*
    \midrule
    \endfirsthead
    \toprule
    $\boldsymbol{(a, b)}$
    &
    \boldsymbol{A_2}
    &
    \boldsymbol{A_3}
    \\*
    \midrule
    \endhead
    \addlinespace
    \multicolumn{3}{@{}l}{Continued on the next page.}
    \endfoot
    \bottomrule
    \endlastfoot
    
    $(6, 6)$
    &
    (-9, -1, 2, -2, -3, 7, 7, 5, 5, -4)
    &
    (3, -4, -7, 3, -1, -2, -9, 4, 8, 6)
    \\*
    \midrule
    
    $(6, 5)$
    &
    (3, -7, 5, 4, 3, 2, 9, 5, 7, -8)
    &
    (-3, 5, 5, -5, -3, -6, 3, -2, 0, -7)
    \\*
    \midrule
    
    $(6, 4)$
    &
    (-7, 7, -6, 6, 5, 5, 5, -8, -2, 1)
    &
    (-3, 8, -2, 1, -4, 1, 5, -8, 8, -7)
    \\*
    \midrule
    
    $(6, 3)$
    &
    (0, 0, 3, -3, 0, -2, 8, 6, 7, 1)
    &
    (-2, 1, 5, 4, 5, 6, -8, 5, 1, 6)
    \\*
    \midrule
    
    $(6, 2)$
    &
    (9, 7, 2, 3, -5, -9, -2, -2, 3, -5)
    &
    (-9, 6, -3, 3, -7, 2, -1, 2, -7, 5)
    \\*
    \midrule
    
    $(6, 1)$
    &
    (8, -5, 2, -9, 1, -1, 2, -5, 9, -8)
    &
    (-9, 6, -3, 3, -7, 2, -1, 2, -7, 5)
    \\*
    \midrule
    
    $(6, 0)$
    &
    (-3, 6, -4, 1, 2, 6, 9, 0, -7, 8)
    &
    (8, 6, 3, -4, 5, 9, 7, 3, 7, -9)
    \\*
    \midrule
    
    $(4, 4)$
    &
    (1, -6, -6, 4, 6, 2, 5, -5, -1, -8)
    &
    (-5, 1, -7, 6, 9, 9, 7, -9, -8, -2)
    \\*
    \midrule
    
    $(4, 3)$
    &
    (6, 3, 9, 9, -8, 9, 0, -7, 6, -7)
    &
    (-1, 3, 3, 3, -9, 5, -6, 5, 4, -9)
    \\*
    \midrule
    
    $(4, 2)$
    &
    (6, 8, -3, 9, 2, -2, -9, 4, 6, 7)
    &
    (-2, 9, -4, -2, 8, -1, 9, 1, -4, 4)
    \\*
    \midrule
    
    $(4, 1)$
    &
    (2, 9, -1, -8, 1, 0, -1, -8, 6, -2)
    &
    (2, -6, 0, -6, -5, 2, -1, 6, -1, 9)
    \\*
    \midrule
    
    $(4, 0)$
    &
    (-8, 0, -9, 6, 3, -6, 3, -7, 6, 6)
    &
    (1, -5, -1, 8, -9, 0, 4, -2, 8, -3)
    \\*
    \midrule
    
    $(2, 2)$
    &
    (4, -4, 1, 6, 3, 2, 0, -5, 1, -3)
    &
    (4, 3, 9, 5, -6, -3, -5, 2, 0, 4)
    \\*
    \midrule
    
    $(2, 1)$
    &
    (-5, 8, -1, -6, 8, 7, 9, 5, 9, 6)
    &
    (3, 1, 4, 5, -4, -6, -8, -2, 8, 5)
    \\*
    \midrule
    
    $(2, 0)$
    &
    (-9, -6, 5, -1, 4, 0, 1, 8, 1, 6)
    &
    (-7, 5, -7, 4, 5, 0, 3, 6, 4, 7)
    \\*
    \midrule
    
    $(0, 0)$
    &
    (-5, 9, 5, 1, 1, -2, 9, -2, 0, 2)
    &
    (7, -6, 5, 1, -4, 1, 3, -9, 8, -5)
\end{longtable}

\section{Spectrahedral symmetroids with a tacnode}

Consider the space~$Q(x)$ of all quadrics with basepoints~$p$, $\overline{p}$.
After a change of coordinates,
we may assume that $p \coloneqq [1 : i : 0 : 0]$.
Then quadrics in $Q(x)$ have matrices on the form
\begin{equation}
    \label{eq:two-basepoints-matrix}
    \begin{bmatrix}
        x_{00} &    0   & x_{02} & x_{03}
        \\
           0   & x_{00} & x_{12} & x_{13}
        \\
        x_{02} & x_{12} & x_{22} & x_{23}
        \\
        x_{03} & x_{13} & x_{23} & x_{33}
    \end{bmatrix}
    \mkern-7mu.
\end{equation}
Using the notation from \cref{sec:proof-B},
a $3$-space $Q_A(x) \subset Q(x)$,
corresponding to a tacnodal symmetroid,
intersects $\Sing(X_i) = V(x_{00}, x_{02}, x_{03}, x_{12}, x_{13})$
in a point.
Hence $A(x)$ has the form 
\begin{equation}
    \label{eq:tacnode}
    A(x)
    \coloneqq
    \begin{bmatrix}
        l_{00} & 0      & l_{02}             & l_{03}
        \\
        0      & l_{00} & l_{12}             & l_{13}
        \\
        l_{02} & l_{12} & a_{22}x_0 + l_{22} & a_{23}x_0 + l_{23}
        \\
        l_{03} & l_{13} & a_{23}x_0 + l_{23} & a_{33}x_0 + l_{33}
    \end{bmatrix}
    \mkern-7mu,
\end{equation}
where each $l_{ij}$ is a linear form in $x_1$, $x_2$, $x_3$,
and $a_{ij} \in \R$.
Moreover,
if $V(f_A)$ is spectrahedral,
we can take $A_1 = A([0 : 1 : 0 : 0])$ to be positive definite.

To find examples,
we let $A_1$ in \eqref{eq:AxAx} be the identity matrix~$I_4$,
and
\begin{equation*}
    A_0
    \coloneqq
    \begin{bmatrix}
        0 & 0 & 0      & 0
        \\
        0 & 0 & 0      & 0
        \\
        0 & 0 & \alpha & 0
        \\
        0 & 0 & 0      & \beta
    \end{bmatrix}
    \mkern-7mu.
\end{equation*}
For symmetroids with a tacnode on the boundary of the spectrahedron,
we chose $\alpha = 1$ and $\beta = 2$.
Likewise,
we chose $\alpha = 1$ and $\beta =-2$
for symmetroids with a tacnode disjoint from the spectrahedron.
We used a pseudorandom generator to draw symmetric matrices~$A_2$ and $A_3$
on the form~\eqref{eq:two-basepoints-matrix} with integer entries between $-9$ and $9$.
Doing this,
we found all values of $(a, b)$ specified
by \cref{thm:main}.\ref{main-item:tacnode-on-boundary}
and \cref{thm:main}.\ref{main-item:tacnode-disjoint}.
The results are listed in \cref{tab:B}.

\begin{longtable}{@{}
        m{0.08\textwidth}
        >{\(}m{0.39\textwidth}<{\)}
        >{\(}m{0.38\textwidth}<{\)}
        @{}}
    \caption{Examples of matrices defining spectrahedral symmetroids
        of types~\ref{main-item:tacnode-on-boundary}
        and \ref{main-item:tacnode-disjoint}.}
    \label{tab:B}
    \\*
    \toprule
    $\boldsymbol{(a, b)}$
    &
    \textbf{\ref{main-item:tacnode-on-boundary}}
    &
    \textbf{\ref{main-item:tacnode-disjoint}}
    \\*
    \midrule
    \endfirsthead
    \toprule
    $\boldsymbol{(a, b)}$
    &
    \textbf{\ref{main-item:tacnode-on-boundary}}
    &
    \textbf{\ref{main-item:tacnode-disjoint}}
    \\*
    \midrule
    \endhead
    \addlinespace
    \multicolumn{3}{@{}l}{Continued on the next page.}
    \endfoot
    \bottomrule
    \endlastfoot

    $(6, 4)$
    &
    (6, 0, -3, 6, 6, -6, -4, 0, 3, 6)
    \newline
    (3, 0, 5, -8, 3, -3, -4, 2, -5, 8)
    &
    (8, 0, -6, -8, 8, 7, 1, -7, 4, 7)
    \newline
    (6, 0, 2, -2, 6, -7, 6, -1, 9, 7)
    \\*
    \midrule

    $(4, 4)$
    &
    (0, 0, 4, -4, 0, 3, 3, 1, -8, 4)
    \newline
    (5, 0, -2, 4, 5, -5, 2, 1, -5, -8)
    &
    (4, 0, -8, -2, 4, 5, -3, 5, -8, 9)
    \newline
    (3, 0, 2, -4, 3, 0, 4, 0, 2, 6)
    \\*
    \midrule

    $(4, 2)$
    &
    (8, 0, -5, 8, 8, 8, -3, 5, -3, 2)
    \newline
    (-8, 0, 1, 9, -8, -9, 7, -8, 8, 9)
    &
    (1, 0, 5, -7, 1, 6, 4, -6, -3, 6)
    \newline
    (6, 0, 2, -8, 6, 1, -3, 5, 4, -8)
    \\*
    \midrule

    $(2, 2)$
    &
    (6, 0, -8, -4, 6, 1, 9, -8, 0, 3)
    \newline
    (3, 0, -2, 7, 3, 7, 6, -6, 4, 1)
    &
    (7, 0, 6, 5, 7, 2, -4, 0, -7, 2)
    \newline
    (0, 0, -8, 2, 0, 3, 7, 8, -3, 7)
    \\*
    \midrule

    $(2, 0)$
    &
    (6, 0, 6, -5, 6, 6, -1, 1, -1, 7)
    \newline
    (8, 0, 5, -8, 8, -6, 1, 2, -2, -9)
    &
    (5, 0, 6, -6, 5, -6, 8, -8, 7, 6)
    \newline
    (4, 0, 2, -1, 4, 6, -6, -2, 3, 5)
    \\*
    \midrule

    $(0, 0)$
    &
    (-1, 0, 2, 4, -1, 0, 1, 7, 5, 5)
    \newline
    (7, 0, 4, -6, 7, 5, 8, -2, 0, -4)
    &
    (9, 0, 3, -9, 9, 6, -7, -4, 1, 1)
    \newline
    (4, 0, 5, -9, 4, -2, 5, 3, 1, -4)
\end{longtable}

\section{Spectrahedral symmetroids with a double conic}
\label[section]{app:conic}

To find examples of symmetroids~$V(f_A)$ of type~\ref{main-item:conic-on-boundary},
we can take $A(x)$ to be on the form~\eqref{eq:C1}.
For $a_0 = 1$,
we get
$(a, b) = (4, 2$) if both discriminants~\eqref{eq:C1-Ds2} and \eqref{eq:C1-Di} are positive,
$(a, b) = (2, 2)$ if \eqref{eq:C1-Ds2} is positive and \eqref{eq:C1-Di} is negative,
and
$(a, b) = (2, 0)$ if \eqref{eq:C1-Ds2} is negative and \eqref{eq:C1-Di} is positive.
It remains to check that $A(x)$ contains a definite matrix
to conclude that $V(f_A)$ is in fact spectrahedral.
In particular,
the following examples are spectrahedral:
\begin{itemize}
    \item
    $a_0 \coloneqq 1$, $a_1 \coloneqq 0$, $a_2 \coloneqq 1$, $a_3 \coloneqq -1$
    gives a symmetroid with $(a, b) = (4, 2)$;

    \item
    $a_0 \coloneqq 1$, $a_1 \coloneqq 0$, $a_2 \coloneqq -2$, $a_3 \coloneqq -4$
    gives a symmetroid with $(a, b) = (2, 2)$;
        
    \item
    $a_0 \coloneqq 1$, $a_1 \coloneqq -3$, $a_2 \coloneqq 0$, $a_3 \coloneqq 1$
    gives a symmetroid with $(a, b) = (2, 0)$.
\end{itemize}

To find examples of symmetroids~$V(f_A)$ of type~\ref{main-item:conic-disjoint},
we can take $A(x)$ to be on the form \eqref{eq:C2}.
For $a_0 = 1$,
we get $(a, b) = (4, 4)$ if both discriminants~\eqref{eq:C2-Ds1} and \eqref{eq:C2-Ds2} are positive,
and $(a, b) = (2, 2)$ if only one of \eqref{eq:C2-Ds1} and \eqref{eq:C2-Ds2} is positive.
It remains to check that $A(x)$ contains a definite matrix
to conclude that $V(f_A)$ is in fact spectrahedral.
In particular,
the following examples are spectrahedral:
\begin{itemize}
    \item
    $a_0 \coloneqq 1$, $a_1 \coloneqq 3$, $a_2 \coloneqq 0$, $a_3 \coloneqq 0$
    gives a symmetroid with $(a, b) = (4, 4)$;

    \item
    $a_0 \coloneqq 1$, $a_1 \coloneqq \frac{1}{2}$, $a_2 \coloneqq 0$, $a_3 \coloneqq 0$
    gives a symmetroid with $(a, b) = (2, 2)$.
\end{itemize}

To find examples of symmetroids~$V(f_A)$ of type~\ref{main-item:cyclide},
we can again take $A(x)$ to be on the form \eqref{eq:C1}.
For $a_0 = 1$,
we get the correct type
if the discriminant~$D_{s2}$ from \eqref{eq:C1-Ds2} is positive,
$D_i$ from \eqref{eq:C1-Di} is negative
and $D_{s2} < -D_i$.
It remains to check that $A(x)$ contains a definite matrix
to conclude that $V(f_A)$ is in fact spectrahedral.
In particular, \eqref{eq:C1} gives a spectrahedral symmetroid
of type~\ref{main-item:cyclide} for $a_0 \coloneqq 1$, $a_1 \coloneqq 2$, $a_2 \coloneqq -5$ and $a_3 \coloneqq 6$.

\section{Spectrahedral symmetroids with rank 2 along a double line}
\label[section]{app:line}

Consider the space~$Q(x)$ of quadrics with coplanar basepoints~$p_1$,
$\overline{p}_1$, $p_2$, $\overline{p}_2$.
After a change of coordinates,
we may assume that
$p_1 \coloneqq [1 : i : 0 : 0]$ and $p_2 \coloneqq [1 : 0 : i : 0]$.
Then the quadrics in $Q(x)$ have matrices on the form
\begin{equation*}
    \label{eq:matrix-coplanar-basepoints}
    \begin{bmatrix}
        x_{00} &   0    &   0    & x_{03} \\
        0      & x_{00} & x_{12} & x_{13} \\
        0      & x_{12} & x_{00} & x_{23} \\
        x_{03} & x_{13} & x_{23} & x_{33}
    \end{bmatrix}
    \mkern-7mu.
\end{equation*}
If $Q_A(x) \subset Q(x)$ is a generic $3$-space,
we may after a projective linear transformation
assume that $A(x)$ is on the form
\begin{equation}
    \label{eq:line-matrix}
    \begin{bmatrix}
        x_0 & 0                           &
        0   & x_1
        \\
        0   & x_0                         &
        x_2 & x_3
        \\
        0   & x_2                         &
        x_0 & a_0x_0 + a_1x_1 + a_2x_2 + a_3x_3
        \\
        x_1 & x_3                         &
        a_0x_0 + a_1x_1 + a_2x_2 + a_3x_3 &
        b_0x_0 + b_1x_1 + b_2x_2 + b_3x_3
    \end{bmatrix}
    \mkern-7mu,
\end{equation}
for $a_i, b_i \in \R$.
Using the notation from \cref{lem:quadric-of-quadrics},
the \locus{2} of $Q_A(x)$ is
\begin{align*}
    Q_A(x) \cap W
    &=
    V(x_0, x_2),
    \\
    Q_A(x) \cap Q_i
    &=
    V\big(x_0, x_1, b_2x_2^2 + (-2a_2 + b_3)x_2x_3 - 2a_3x_3^2\big),
    \\
    Q_A(x) \cap Q_{s1}
    &=
    V
    \big(
        x_0 - x_2,
        a_1x_1 + (a_0 + a_2)x_2 + (a_3 - 1)x_3,
        \\
        &\qquad\qquad
        x_1^2 - b_1x_1x_2 - (b_0 + b_2)x_2^2 - b_3x_2x_3 + x_3^2
    \big),
    \\
    Q_A(x) \cap Q_{s2}
    &=
    V
    \big(
        x_0 + x_2,
        a_1x_1 - (a_0 - a_2)x_2 + (a_3 + 1)x_3,
        \\
        &\qquad\qquad
        x_1^2 + b_1x_1x_2 - (b_0 - b_2)x_2^2 + b_3x_2x_3 + x_3^2
    \big).
\end{align*}
Hence the matrix $A(x)$ defines a symmetroid~$V(f_A)$
of type~\ref{main-item:line}.

For $a_1 = 1$,
the reality of the isolated nodes is determined by the discriminants
\begin{align*}
    D_i
    &\coloneqq
    (2a_2 + b_3)^2 + 8b_2a_3,
    \\
    D_{s1}
    &\coloneqq
    (a_3(2a_0 + 2a_2 + b_1) - 2a_0 - 2a_2 - b_1 - b_3)^2
    \\
    &\qquad
    - 4(a_3^2 - 2a_3 + 2)((a_0 + a_2)(a_0 + a_2 + b_1) - b_0 - b_2),
    \\
    D_{s2}
    &\coloneqq
    (a_3(-2a_0 + 2a_2 - b_1) - 2a_0 + 2a_2 - b_1 + b_3)^2
    \\
    &\qquad
    - 4(a_3^2 + 2a_3 + 2)((a_0 - a_2)(a_0 - a_2 + b_1) - b_0 + b_2).
\end{align*}
More precisely,
\begin{itemize}
    \item
    $(a, b) = (6, 4)$ if $D_i$, $D_{s1}$ and $D_{s2}$ are positive;

    \item
    $(a, b) = (4, 4)$ if $D_{s1}$ and $D_{s2}$ are positive and $D_{i}$ is negative;

    \item
    $(a, b) = (4, 2)$ if $D_i$ is positive and either $D_{s1}$ or $D_{s2}$ is positive;

    \item
    $(a, b) = (2, 2)$ if either $D_{s1}$ or $D_{s2}$ is positive and $D_i$ is negative,

    \item
    $(a, b) = (2, 0)$ if $D_i$ is positive and $D_{s1}$ and $D_{s2}$ are negative;

    \item
    $(a, b) = (0, 0)$ if $D_i$, $D_{s1}$, $D_{s2}$ are negative.
\end{itemize}
It remains to check that $A(x)$ contains a definite matrix
to conclude that $V(f_A)$ is in fact spectrahedral.
Spectrahedral examples are given in \cref{tab:D},
except $(a, b) = (0, 0)$.

\begin{table}[htbp]
    \centering
    \caption{Examples of parameters for \eqref{eq:line-matrix}
        that define spectrahedral symmetroids of type~\ref{main-item:line}.}
    \label{tab:D}
    \begin{tabular}{@{}>{\(}c<{\)}>{\(}c<{\)}>{\(}c<{\)}
                       >{\(}r<{\)}>{\(}r<{\)}>{\(}c<{\)}
                       >{\(}c<{\)}>{\(}r<{\)}>{\(}c<{\)}@{}}
        \toprule
        \boldsymbol{(a, b)} & \boldsymbol{a_0} & \boldsymbol{a_1} &
        \boldsymbol{a_2}    & \boldsymbol{a_3} & \boldsymbol{b_0} &
        \boldsymbol{b_1}    & \boldsymbol{b_2} & \boldsymbol{b_3}
        \\
        \midrule
        (6, 4) & 0 & 1 &  0 &  1 & 0 & 0 &  0 & 1
        \\
        (4, 4) & 0 & 1 &  0 & -1 & 1 & 0 &  1 & 1
        \\
        (4, 2) & 0 & 1 & -1 &  2 & 0 & 0 &  0 & 1
        \\
        (2, 2) & 0 & 1 &  0 &  1 & 0 & 0 & -1 & 1
        \\
        (2, 0) & 0 & 1 & -1 &  1 & 0 & 0 &  0 & 1
        \\
        \bottomrule
    \end{tabular}
\end{table}

\begin{remark}
    If we let
    $a_0 \coloneqq 3$, $a_1 \coloneqq 1$, $a_2 \coloneqq 0$,
    $a_3 \coloneqq 1$, $b_0 \coloneqq 0$, $b_1 \coloneqq 0$,
    $b_2 \coloneqq -1$, $b_3 \coloneqq 0$
    then \eqref{eq:line-matrix} defines a symmetroid
    with rank $2$ along a line and no real, isolated nodes.
    By Sylvester's criterion,
    it is not spectrahedral.
    We have not been able to find a spectrahedral symmetroid
    of type~\ref{main-item:line} with $(a, b) = (0, 0)$,
    nor prove its nonexistence.
\end{remark}

\section{Spectrahedral symmetroids with rank 3 along two double lines}
\label[section]{app:two-lines}

After a change of coordinates,
\cref{prop:general-accidental-lines}
implies that if $V_C(f_A)$ is of type \ref{main-item:two-lines},
then $Q_A(x)$ is contained in the $7$-space $Q(x)$ defined by \eqref{eq:two-basepoints-matrix}.
The discriminant of $Q(x)$ is singular of rank $3$
along the complex conjugate $4$-spaces
$H_4 \coloneqq V(x_{00}, x_{02} - i x_{12}, x_{03} - i x_{13})$
and $\overline{H}_4 \coloneqq V(x_{00}, x_{02} + i x_{12}, x_{03} + i x_{13})$.
In order to find examples of $3$-spaces $Q_A(x)$
corresponding to symmetroids of type~\ref{main-item:two-lines},
we consider the $5$-space $H \coloneqq V(x_{02} - x_{03}, x_{03} + x_{13})$.
Because $H$ intersects $H_4$ and $\overline{H}_4$ in a $3$-space each,
a generic $3$-space $Q_A(x) \subset H$
corresponds to a symmetroid of type~\ref{main-item:two-lines}.
We therefore consider matrices on the form
\begin{equation*}
    A(x)
    \coloneqq
    \begin{bmatrix*}[r]
        l_{00} & 0\,\,\, &  l_{02} & l_{03}
        \\
        0\,\,\ &  l_{00} & -l_{03} & l_{02}
        \\
        l_{02} & -l_{03} &  l_{22} & l_{23}
        \\
        l_{03} &  l_{02} &  l_{23} & l_{33}
    \end{bmatrix*}
    \mkern-7mu,
\end{equation*}
where $l_{ij}$ are linear forms in $x_0$, $x_1$, $x_2$, $x_3$.
In order to make $A(x)$ resemble \eqref{eq:tacnode},
we let $A_0 = A([1 : 0 : 0 : 0])$ correspond to the intersection
between the lines in the singular locus of $V(f_A)$.
Hence we reduce to
\begin{equation}
    \label{eq:matrix-two-lines}
    A(x)
    \coloneqq
    \begin{bmatrix*}
        l_{00} & 0       &  l_{02}             & l_{03}
        \\
        0      &  l_{00} & -l_{03}             & l_{02}
        \\
        l_{02} & -l_{03} &  a_{22}x_0 + l_{22} & a_{23}x_0 + l_{23}
        \\
        l_{03} &  l_{02} &  a_{23}x_0 + l_{23} & a_{33}x_0 + l_{33}
    \end{bmatrix*}
    \mkern-7mu,
\end{equation}
where $l_{ij}$ are linear forms in $x_1$, $x_2$, $x_3$.
Furthermore,
if $V(f_A)$ is spectrahedral,
we can take $A_1 = A([0 : 1 : 0 : 0])$ to be a definite matrix.

We chose $A_1$ to be the positive definite matrix
\begin{equation*}
    \begin{bmatrix}
        1 & 0 & 0 & 0 \\
        0 & 1 & 0 & 0 \\
        0 & 0 & 1 & 1 \\
        0 & 0 & 1 & 2
    \end{bmatrix}
    \mkern-7mu.
\end{equation*}
For type~\ref{main-item:two-lines-on-boundary},
we let $a_{22} \coloneqq 1$, $a_{23} \coloneqq 1$, $a_{33} \coloneqq 3$ in $A_0$.
For type~\ref{main-item:two-lines-disjoint},
we let $a_{22} \coloneqq 1$, $a_{23} \coloneqq 1$ and $a_{33} \coloneqq 0$.
\cref{tab:F}  shows $A_2$ and $A_3$ realizing all values of $(a, b)$
specified by \cref{thm:main}.\ref{main-item:two-lines-on-boundary}
and \cref{thm:main}.\ref{main-item:two-lines-disjoint}.

\begin{longtable}{@{}
                  m{0.08\textwidth}
                  >{\(}m{0.33\textwidth}<{\)}
                  >{\(}m{0.31\textwidth}<{\)}
                  @{}}
    \caption{Examples of matrices defining spectrahedral symmetroids
            of types~\ref{main-item:two-lines-on-boundary}
            and \ref{main-item:two-lines-disjoint}.}
    \label{tab:F}
    \\*
    \toprule
    $\boldsymbol{(a, b)}$
    &
    \textbf{\ref{main-item:two-lines-on-boundary}}
    &
    \textbf{\ref{main-item:two-lines-disjoint}}
    \\*
    \midrule
    \endfirsthead
    \toprule
    $\boldsymbol{(a, b)}$
    &
    \textbf{\ref{main-item:two-lines-on-boundary}}
    &
    \textbf{\ref{main-item:two-lines-disjoint}}
    \\*
    \midrule
    \endhead
    \addlinespace
    \multicolumn{3}{@{}l}{Continued on the next page.}
    \endfoot
    \bottomrule
    \endlastfoot

    $(2, 2)$
    &
    (0, 0, 1, 0, 0, 0, 1, 0, 1, 0)
    \newline
    (0, 0, 0, 1, 0, -1, 0, 0, 0, 0)
    &
    (0, 0, 1, 0, 0, 0, 1, 0, 1, 0)
    \newline
    (0, 0, 0, 1, 0, -1, 0, 0, 0, 3)
    \\*
    \midrule

    $(0, 0)$
    &
    (0, 0, 1, 0, 0, 0, 1, 1, 1, 0)
    \newline
    (0, 0, 0, 1, 0, -1, 0, 0, 0, 0)
    &
    (0, 0, 1, 0, 0, 0, 1, 0, 1, 0)
    \newline
    (0, 0, 0, 1, 0, -1, 0, 0, 0, 0)
\end{longtable}

\clearpage
\section{Pictures}

\begin{figure}[!htbp]
    \centering
    \includegraphics[height = 0.18\textheight]{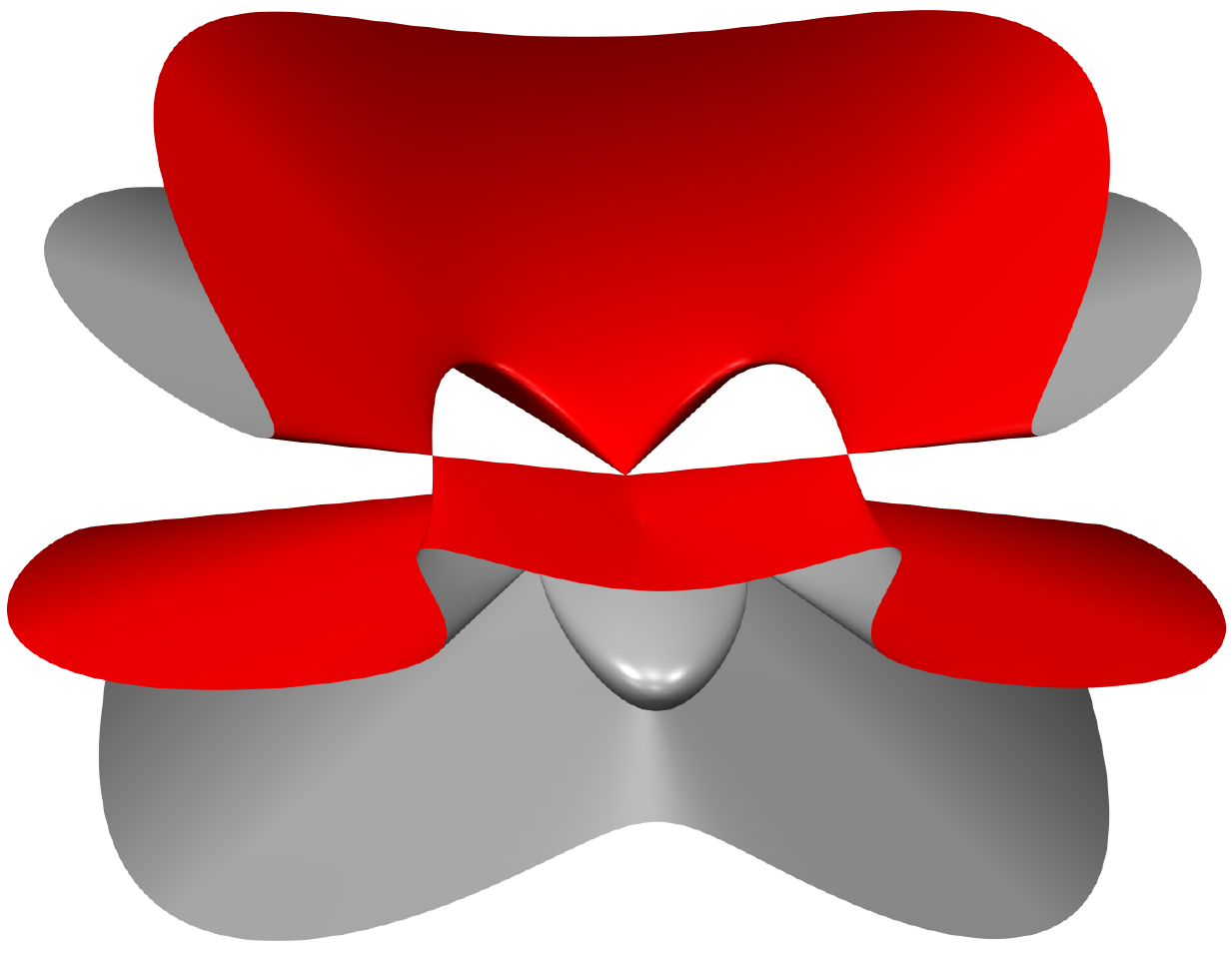}
    \hspace{2em}
    \includegraphics[height = 0.18\textheight]{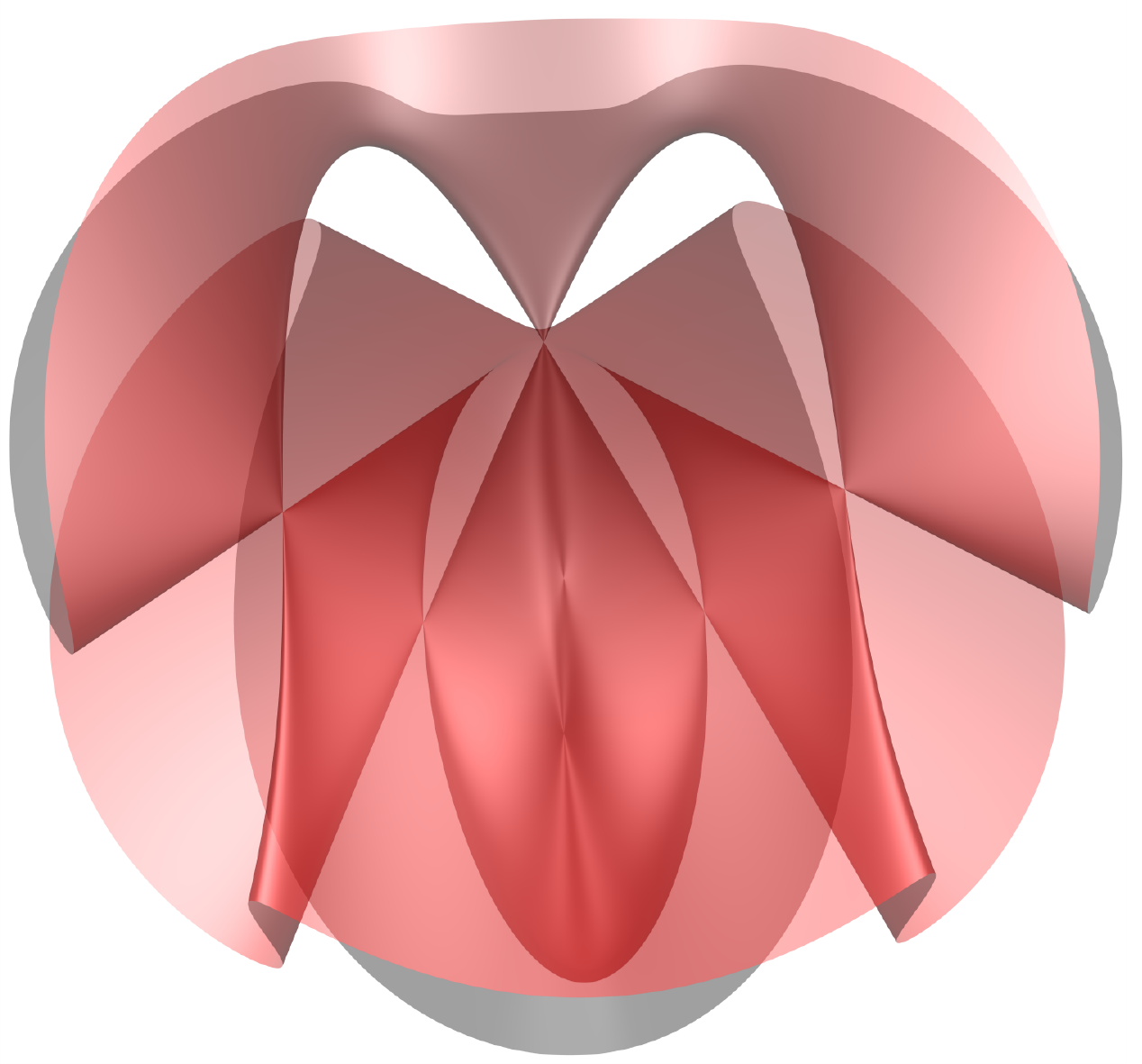}
    \addtolength{\belowcaptionskip}{-1em}
    \caption{A symmetroid of type~\ref{main-item:triple} with $(a, b) = (6, 4)$.}
    \label{fig:triple-point}
\end{figure}

\begin{figure}[!hbtp]
    \centering
    \includegraphics[height = 0.18\textheight]{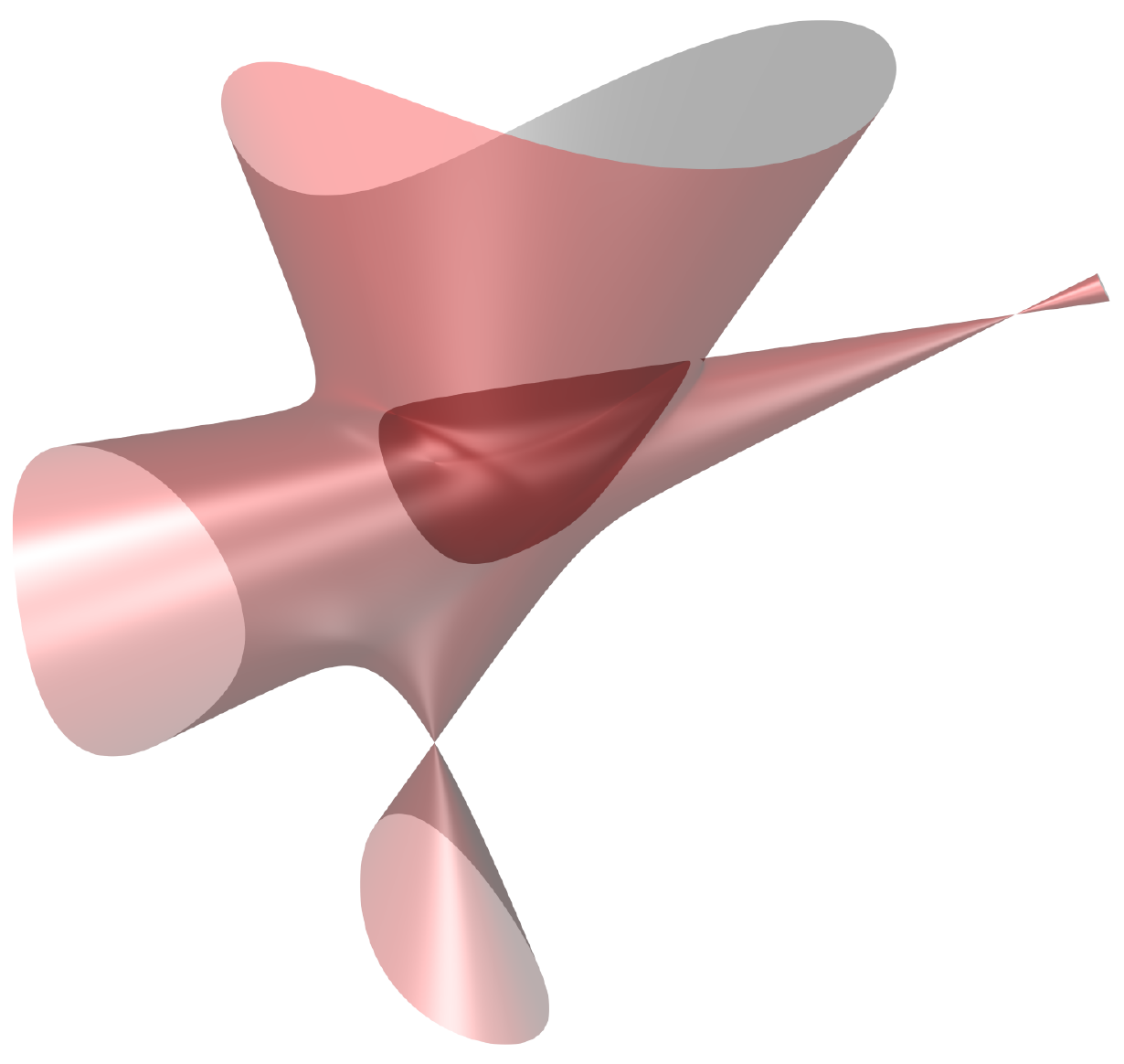}
    \hspace{2em}
    \includegraphics[height = 0.18\textheight]{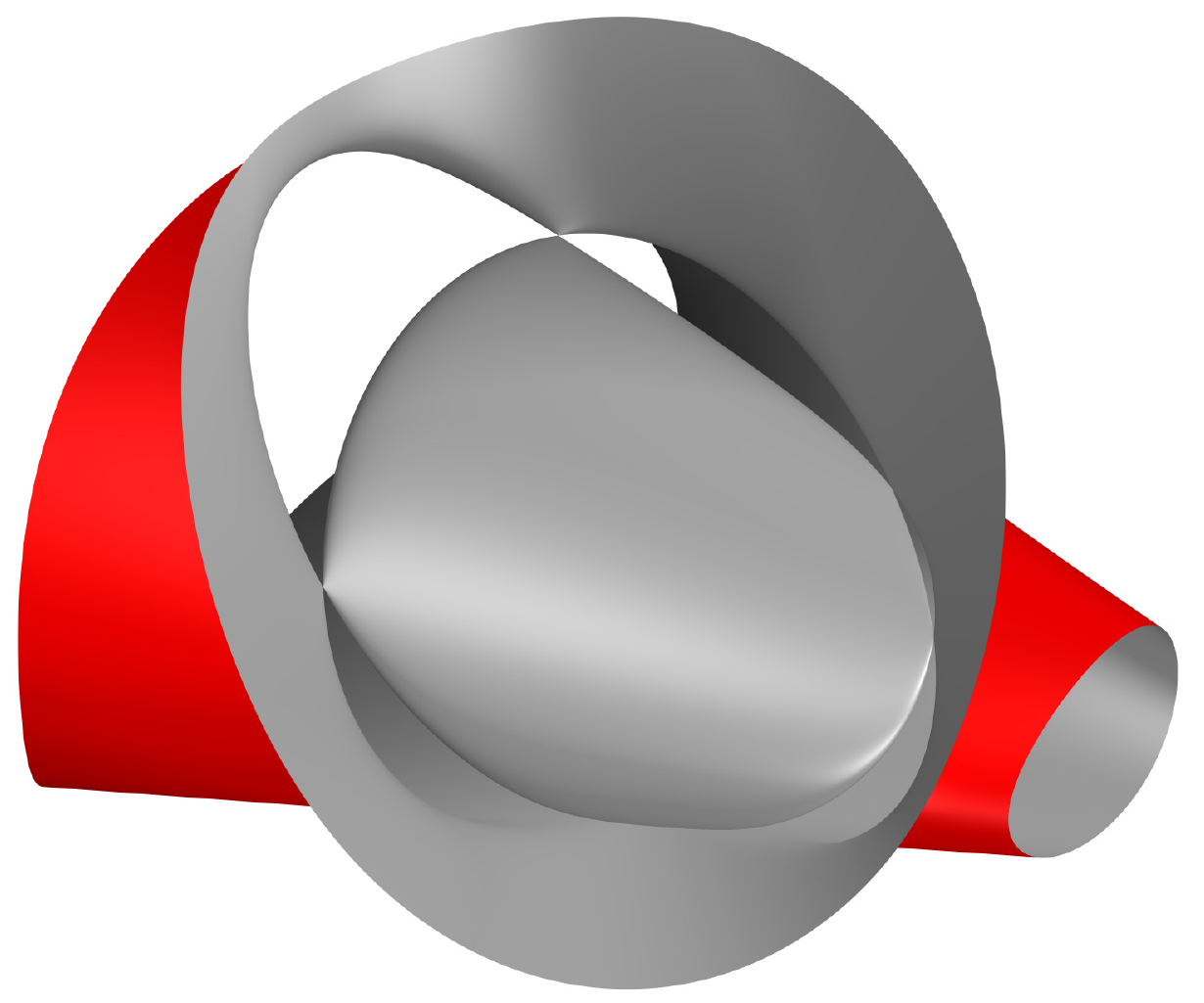}
    \addtolength{\belowcaptionskip}{-1em}
    \caption{A symmetroid of type~\ref{main-item:tacnode-on-boundary} with $(a, b) = (6, 4)$.}
    \label{fig:tacnode}
\end{figure}

\begin{figure}[h!tbp]
    \centering
    \includegraphics[height = 0.18\textheight]{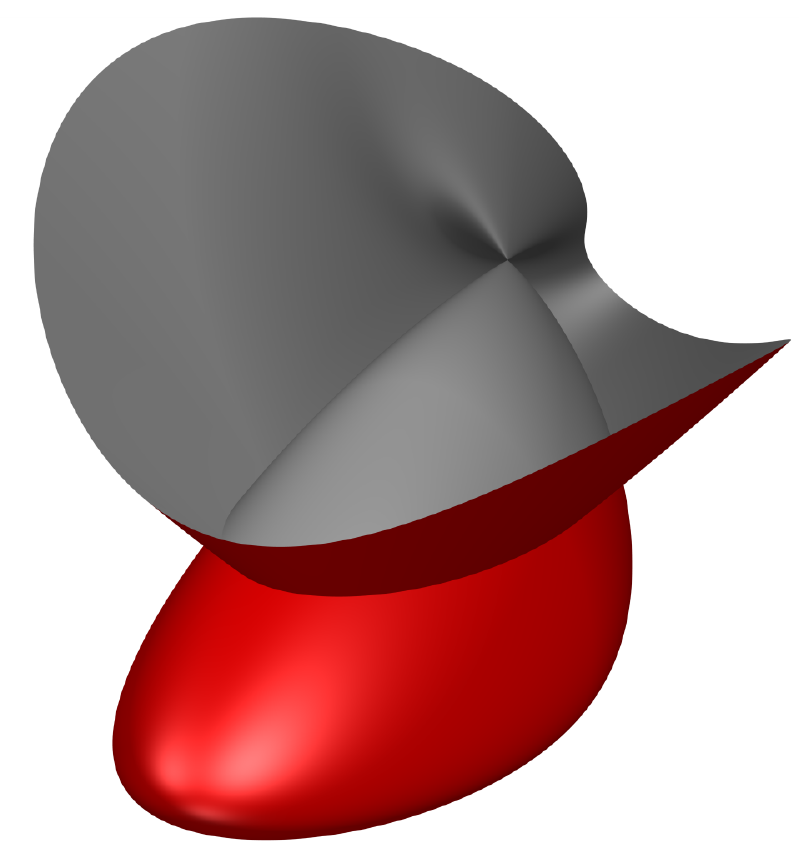}
    \hspace{2em}
    \includegraphics[height = 0.18\textheight]{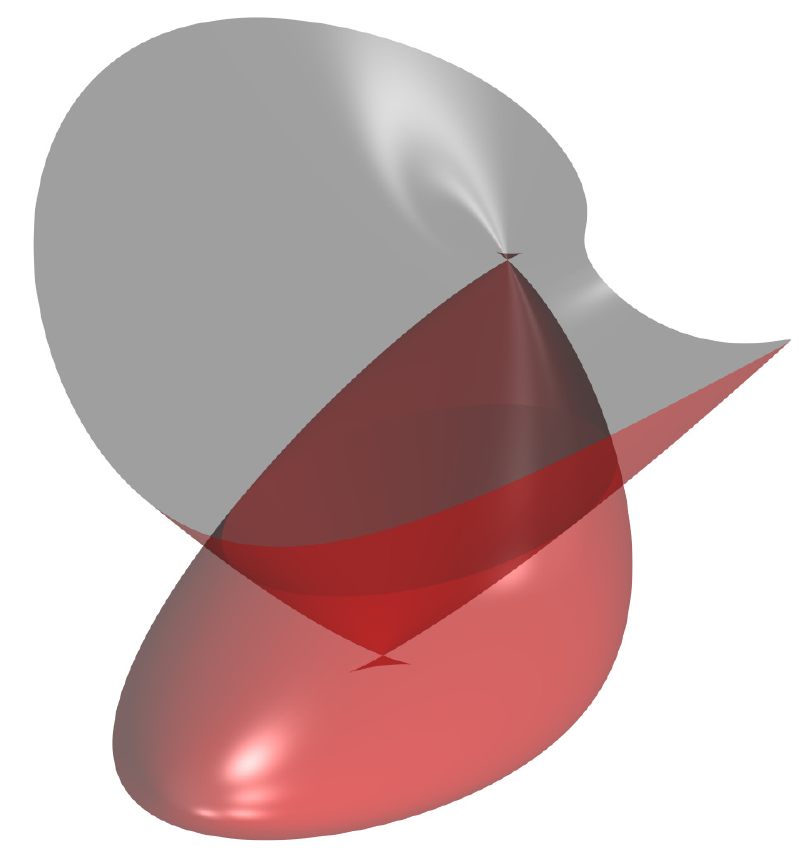}
    \addtolength{\belowcaptionskip}{-1em}
    \caption{A symmetroid of type~\ref{main-item:conic-on-boundary} with $(a, b) = (2, 2)$.}
    \label{fig:boundary-conic}
\end{figure}

\begin{figure}[h!tbp]
    \centering
    \includegraphics[height = 0.18\textheight]{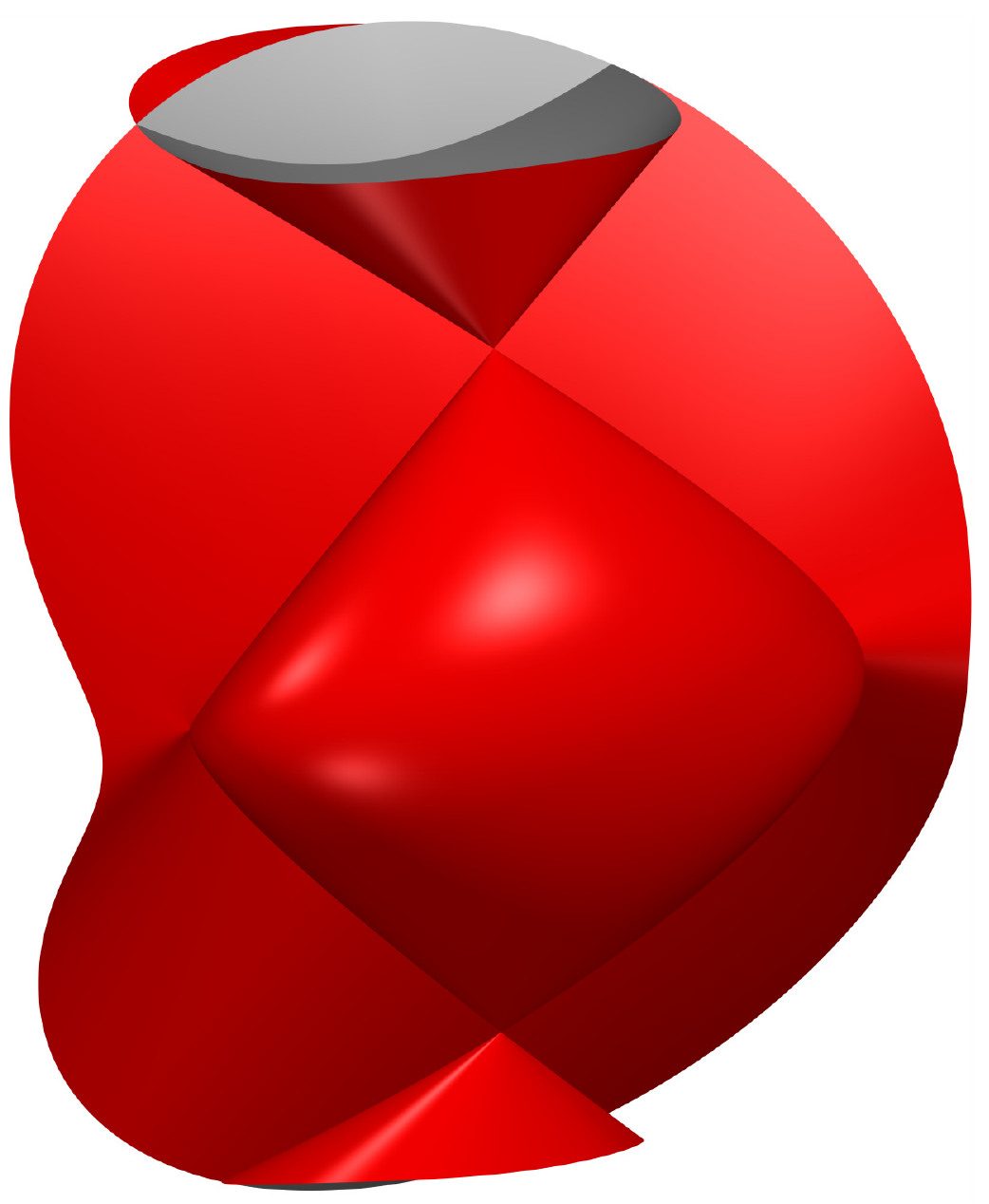}
    \addtolength{\belowcaptionskip}{-2em}
    \caption{A symmetroid of type~\ref{main-item:conic-disjoint} with $(a, b) = (4, 4)$.}
    \label{fig:disjoint-conic}
\end{figure}

\begin{figure}[h!t]
    \centering
    \includegraphics[width = 0.4\textwidth]{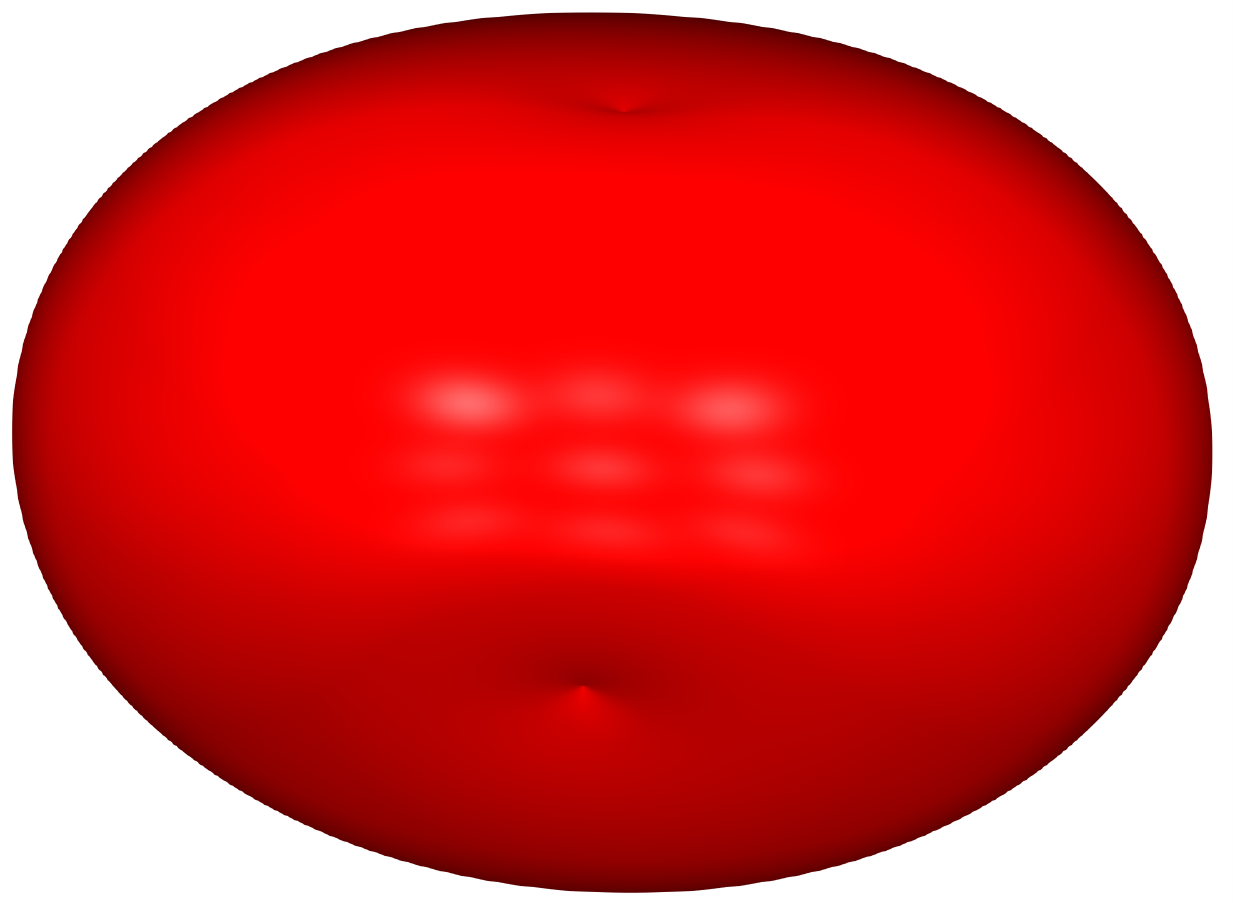}
    \hspace{2em}
    \includegraphics[width = 0.4\textwidth]{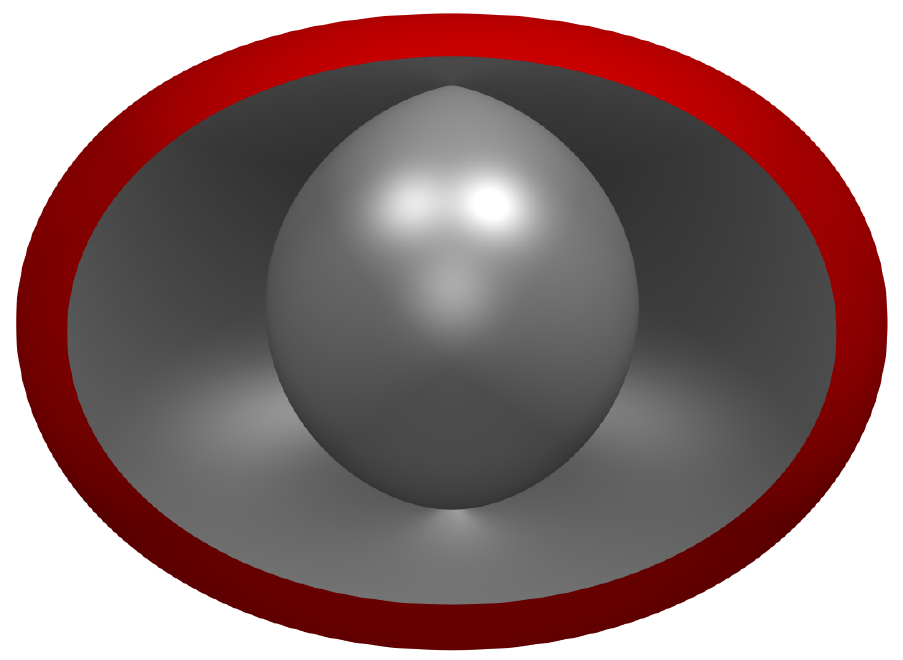}
    \caption{A symmetroid of type~\ref{main-item:cyclide} with $(a, b) = (2, 2)$.
             The surface is known as a \enquote{spindle cyclide}.}
    \label{fig:cyclide}
\end{figure}

\begin{figure}[!bhtp]
    \centering
    \includegraphics[height = 0.16\textheight]{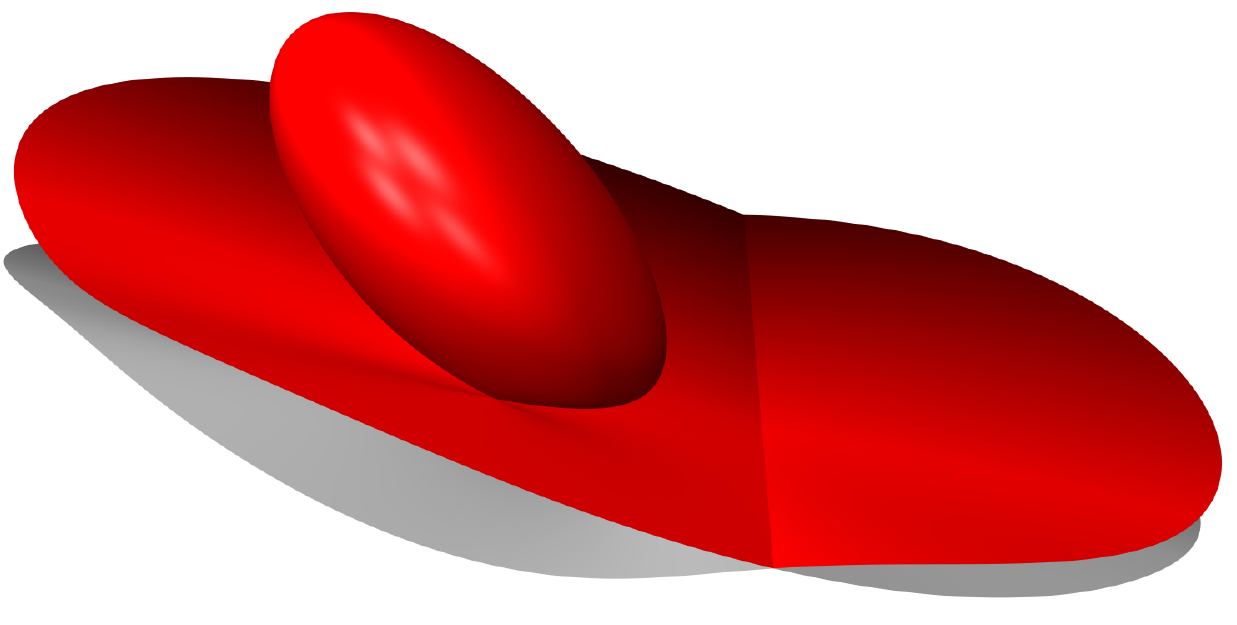}
    \hspace{2em}
    \includegraphics[height = 0.16\textheight]{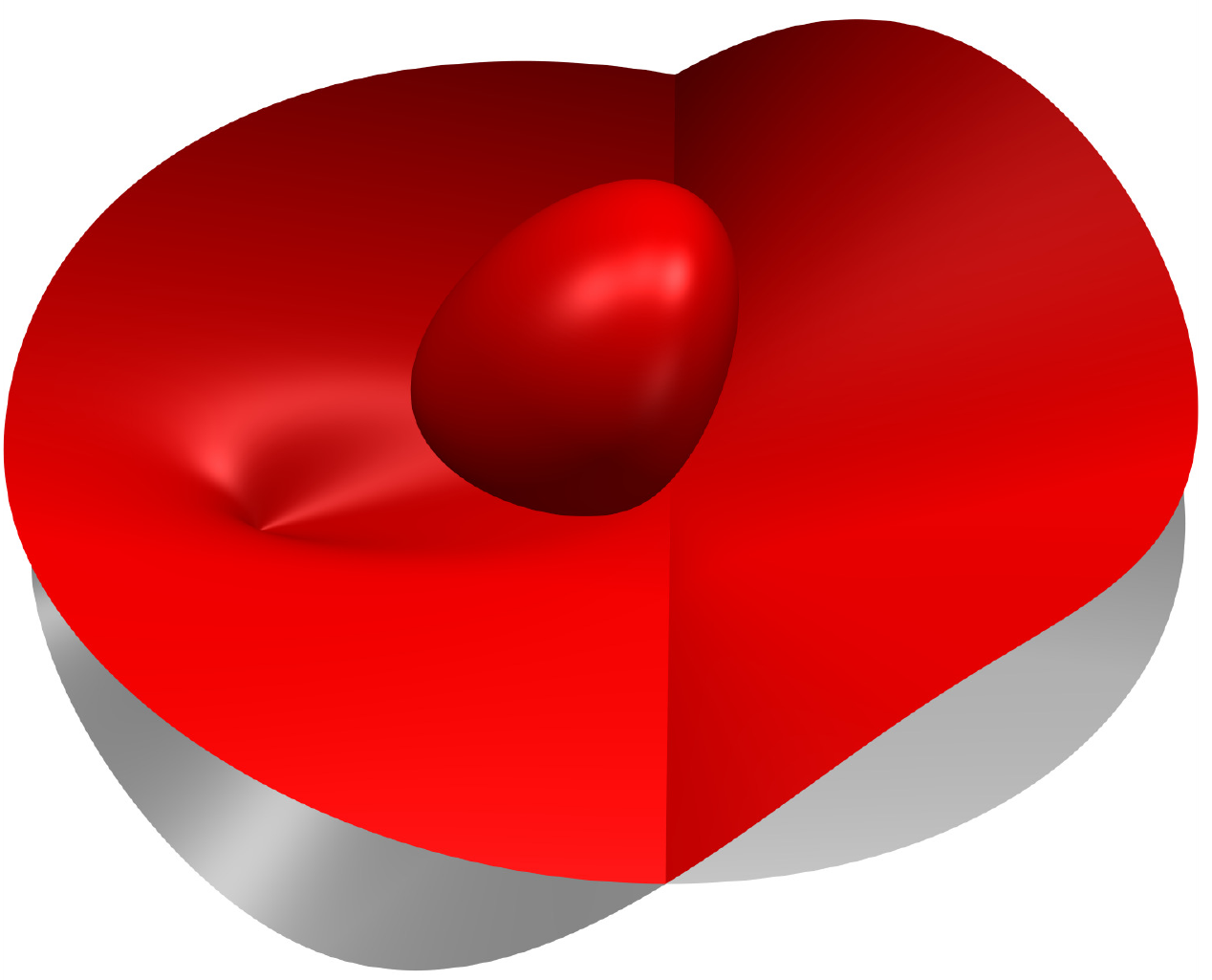}
    \caption{Two symmetroids of type~\ref{main-item:line}.
             The surface to the left has $(a, b) = (2, 2)$
             and the surface to the right has $(a, b) = (2, 0)$.}
    \label{fig:one-line}
\end{figure}

\begin{figure}[!thbp]
    \centering
    \includegraphics[height = 0.16\textheight]{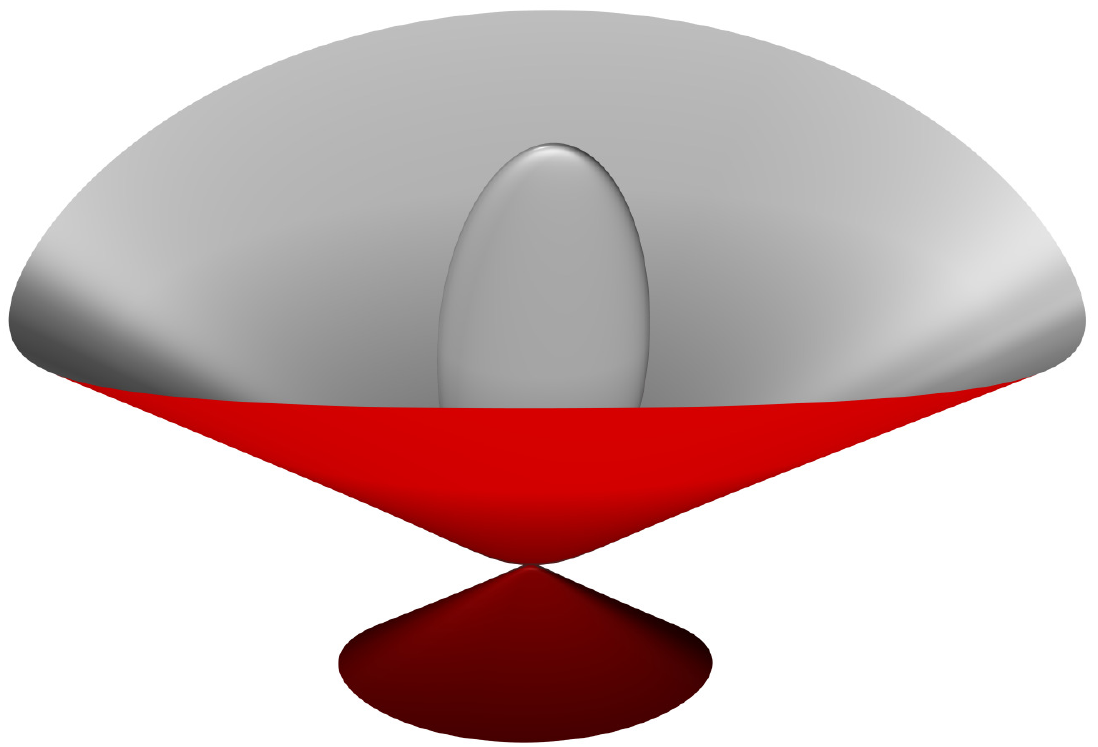}
    \hspace{2em}
    \includegraphics[height = 0.16\textheight]{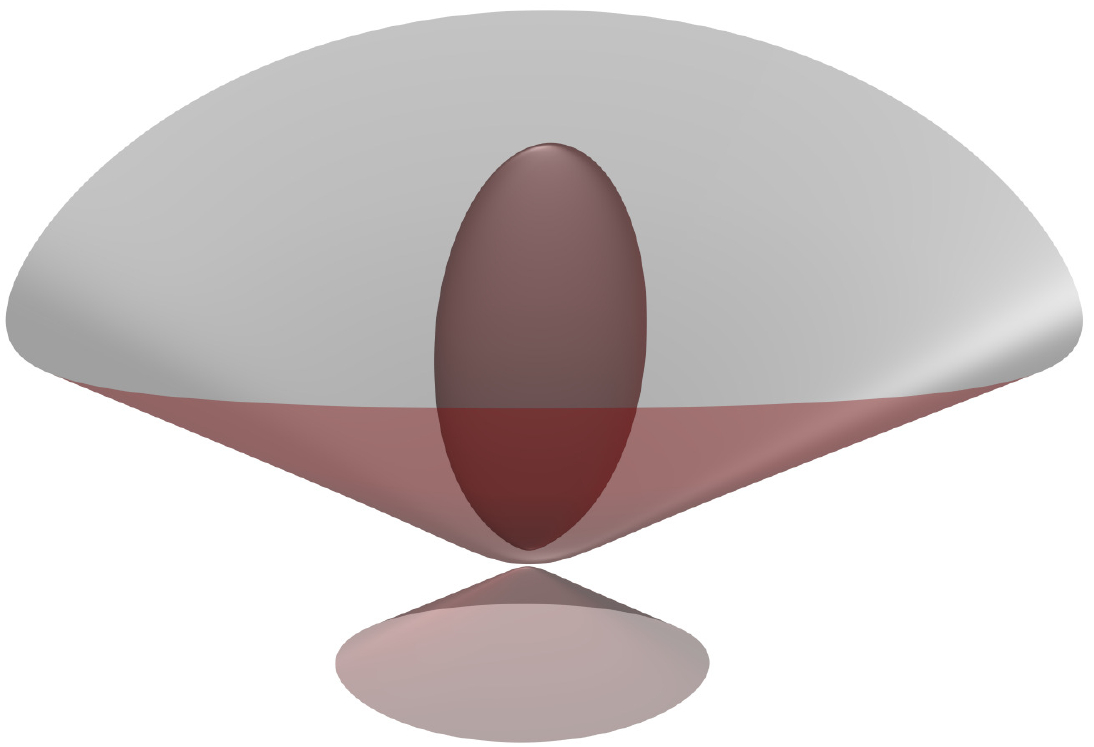}
    \caption{A symmetroid of type~\ref{main-item:two-lines-on-boundary}
             with $(a, b) = (0, 0)$.}
\end{figure}

\begin{figure}[!htbp]
    \centering
    \includegraphics[height = 0.2\textheight]{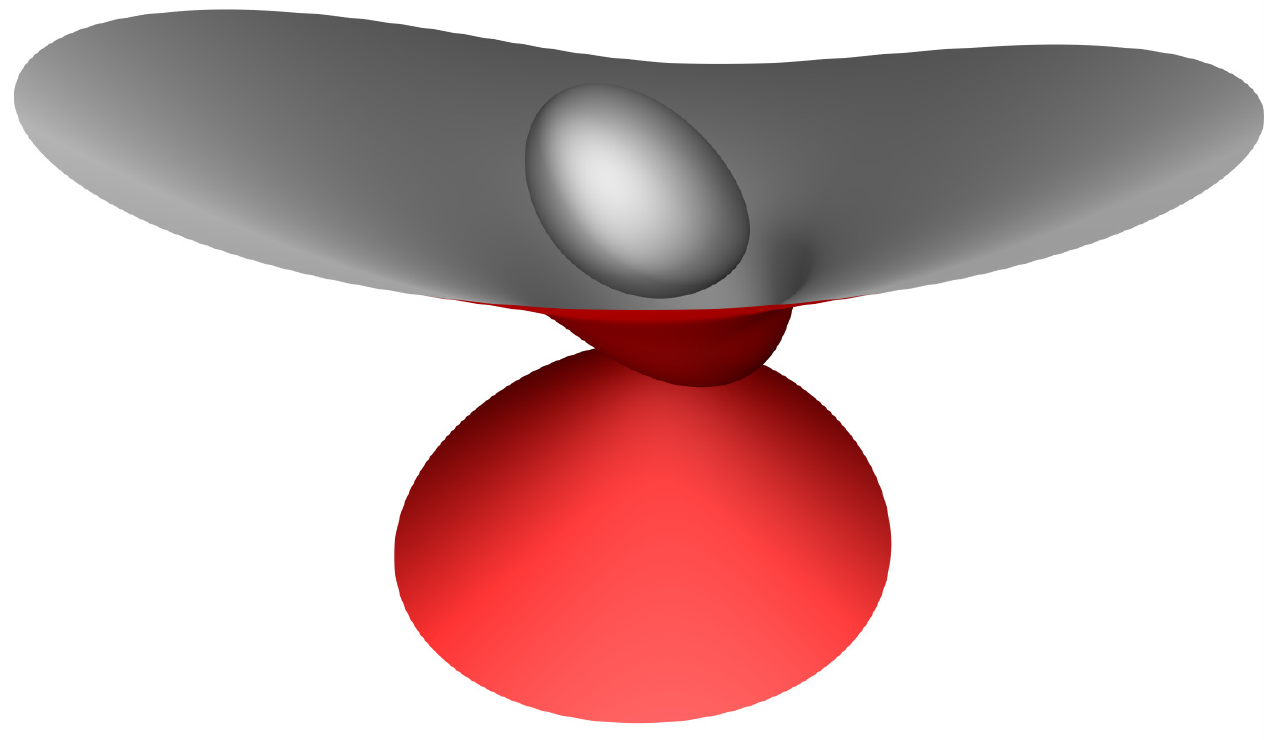}
    \caption{A symmetroid of type~\ref{main-item:two-lines-disjoint}
             with $(a, b) = (0, 0)$.}
\end{figure}

\printbibliography

\paragraph{Authors' addresses}
\begin{description}
    \item[Martin Hels{\o}]
    University of Oslo,
    Postboks 1053 Blindern,
    0316 Oslo,
    Norway,
    \href{mailto:martibhe@math.uio.no}{\nolinkurl{martibhe@math.uio.no}}

    \item[Kristian Ranestad]
    University of Oslo,
    Postboks 1053 Blindern,
    0316 Oslo,
    Norway,
    \href{mailto:ranestad@math.uio.no}{\nolinkurl{ranestad@math.uio.no}}
\end{description}

\end{document}